\newtheorem*{theorem}{Theorem}
\newtheorem*{proposition}{Proposition}
\theoremstyle{remark}
\newtheorem*{remark}{Remark}
\newcommand{\R}{\mathbb{R}}
\newcommand{\N}{\mathbb{N}}
\newcommand{\EE}{\mathbb{E}}
\newcommand{\Pb}{\mathbb{P}}
\newcommand{\cS}{\mathcal{S}}
\def\S{\mathbb{S}}
\DeclarePairedDelimiterX{\norm}[1]{\lVert}{\rVert}{#1}
\DeclareMathOperator{\Diff}{Diff}
\DeclareMathOperator{\SE}{SE}
\DeclareMathOperator{\Tr}{Tr}
\DeclareMathOperator{\Id}{Id}
\DeclareMathOperator{\var}{var}
\newcommand{\kSqE}{k_{\text{SE}}}
\newcommand{\kMat}{k_{\text{Mat\'ern}}}
\newcommand{\gSqE}{g_{\text{SE}}}
\newcommand{\gMat}{g_{\text{Mat\'ern}}}
\begin{document}

% Larger bottom margin for the first page
\newgeometry{bottom=1.5in}

% Editorial staff will replace the following values:
% 1. Volume number
% 2. Issue number
% 3. Article DOI
% e.g. for Volume 2, Issue 3, DOI 12.345:
% \volumeheader{2}{3}{12.345}
%\volumeheader{0}{0}{00.000}

\begin{center}

  \title{Stochastics of shapes and Kunita flows}
  \maketitle

  % Start page numbering on second page. Must appear *after* \maketitle
  \thispagestyle{empty}
  
  \vspace*{.2in}

  % Authors and Affiliations
  \begin{tabular}{cc}
    Stefan Sommer\upstairs{\affilone}, Gefan Yang\upstairs{\affilone}, Elizabeth Louise Baker\upstairs{\affilone}
   \\[0.25ex]
   {\small \upstairs{\affilone} Department of Computer Science, University of Copenhagen} \\
  \end{tabular}
  
  % Replace with corresponding author email address
  \emails{
    \upstairs{*}sommer@di.ku.dk
    }
  \vspace*{0.4in}

\begin{abstract}
Stochastic processes of evolving shapes are used in applications including evolutionary biology, where morphology changes stochastically as a function of evolutionary processes. Due to the non-linear and often infinite-dimensional nature of shape spaces, the mathematical construction of suitable stochastic shape processes is far from immediate. We define and formalize properties that stochastic shape processes should ideally satisfy to be compatible with the shape structure, and we link this to Kunita flows that, when acting on shape spaces, induce stochastic processes that satisfy these criteria by their construction. We couple this with a survey of other relevant shape stochastic processes and show how bridge sampling techniques can be used to condition shape stochastic processes on observed data thereby allowing for statistical inference of parameters of the stochastic dynamics.
\end{abstract}
\end{center}

\vspace*{0.15in}
\hspace{10pt}
  \small	
  \textbf{\textit{Keywords: }} {shape stochastics, Kunita flows, bridge sampling, outer shape models, inner shape models}
  
\copyrightnotice

\section{Introduction}
Statistics of shapes has been treated extensively in the literature, both because of the mathematical intricacies resulting from working with shapes as inherently non-linear objects, and because of the many applications in e.g. biology and medical imaging. Shape spaces span from landmark based representation by Kendall (\cite{kendallShapeManifoldsProcrustean1984}) over elastic metrics (\cite{mioShapePlaneElastic2007,younesMetricShapeSpace2008}) to infinite-dimensional diffeomorphism based representations (\cite{younesShapesDiffeomorphisms2010}) with many additional models and shape spaces in between. This span includes many perspectives on how shapes should be modeled, how metrics on shape spaces can be constructed, and how statistics of shapes can be performed.

In this paper, we look at stochastics of shapes. This is interesting again from several perspectives. First, defining infinite-dimensional stochastic processes in non-linear spaces and subsequently attempting to perform statistical inference of parameters of the model from observed data is challenging. Second, shape stochastics is relevant in many applications, including evolutionary biology, where the stochastic evolution of morphology is an important aspect of the evolutionary process. Thirdly, the non-linear structure of shape spaces makes defining parametric families of probability distributions difficult. Using stochastic processes to generate distributions on shape spaces is one possible solution to this problem.

The paper builds on ideas from \cite{sommerStochasticFlowsShape2021} and \cite{stroustrupStochasticPhylogeneticModels2025}, but here we approach shape stochastics from an axiomatic perspective defining properties that shape stochastic processes should ideally satisfy. We then link this to Kunita flows (\cite{kunitaLecturesStochasticFlows1986,kunitaStochasticFlowsStochastic1997}) that satisfy these criteria by their construction. Our point of formalizing the properties and showing that Kunita flows satisfy these properties highlights Kunita flows as a natural choice of stochastic processes for shapes. This viewpoint allows for example to define a notion of variance for shape processes that is not tied to the particular choice of shape representation.
Additionally, we survey other relevant shape stochastic processes, and we describe how modern bridge sampling techniques can be used to condition shape stochastic processes on observed data and subsequently perform statistics.

With the paper, we wish to describe stochastic processes on shape spaces from an axiomatic perspective, and highlight Kunita flows as a natural family of processes that satisfy these properties. To set the stage, we first briefly describe two important families of shape spaces that we focus on in the paper. We also describe other shape stochastic processes besides Kunita flows and briefly mention other time-series models on manifolds, but we focus the paper on the axiomatic approach to shape stochastics and the link to Kunita flows.

\begin{figure}[h]
    \centering
    \includegraphics[width=0.8\textwidth,clip,trim={0 0 0 50}]{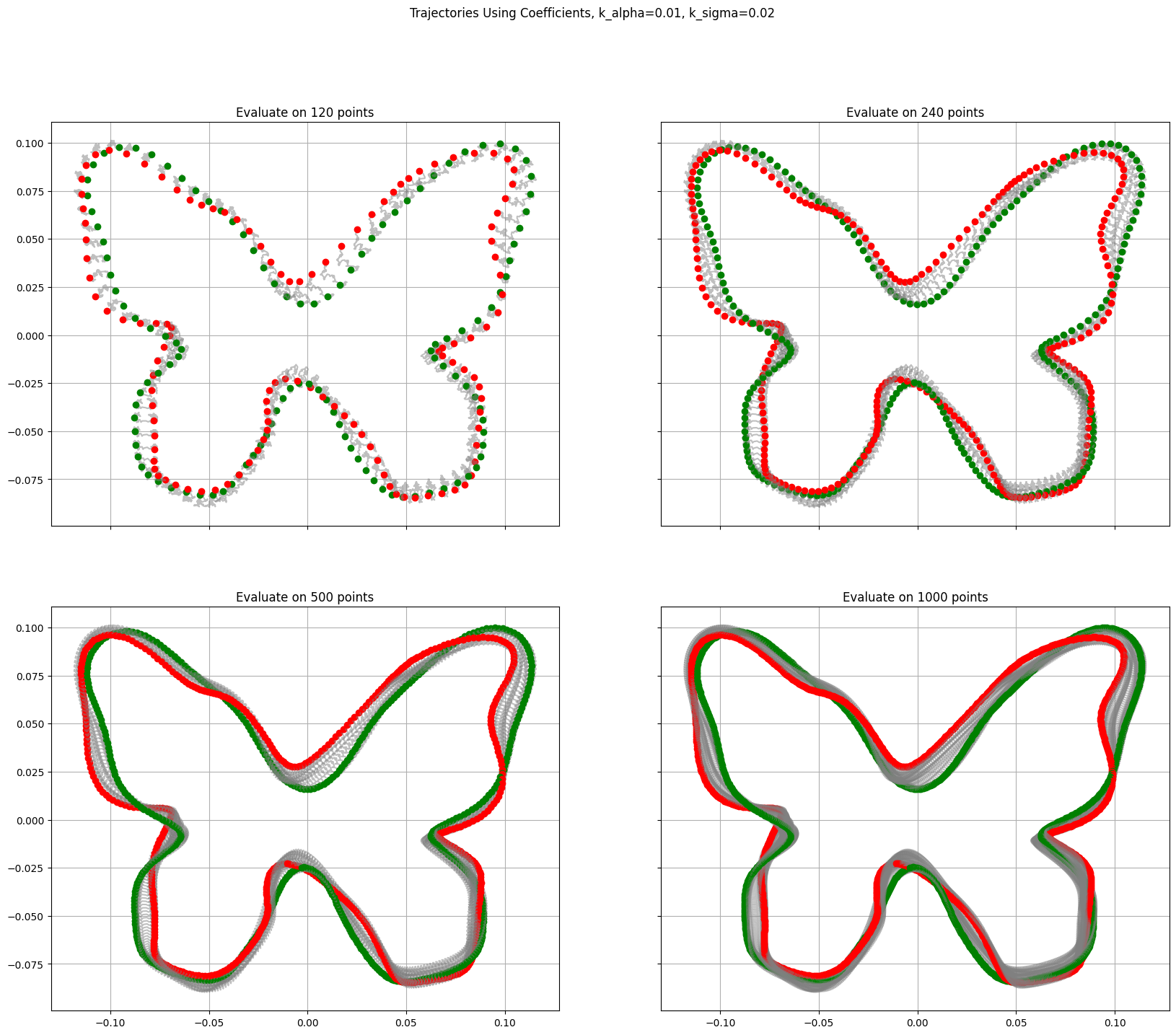}
    \caption{The outline of a butterfly acted upon by a Kunita flow of diffeomorphisms. The continous nature of the flow allows evaluation for an arbitrary discretization and hence number of points representing the shape. In the paper, we outline the Kunita flow theory and its use in the shape context.} % Add caption if needed
    \label{fig:butterflies}
\end{figure}

\subsection{Outline}
In section \ref{sec:shape_spaces}, we survey constructions of specific shape spaces as used in this paper. In section \ref{sec:axiomatic_approach}, we define properties that shape stochastic processes should ideally satisfy, and, in section \ref{sec:kunita_flows}, we link this to Kunita flows that satisfy these criteria by their construction. In section \ref{sec:other_stochastic_processes}, we survey other relevant shape stochastic processes. In section \ref{sec:conditioning_bridge_sampling}, we discuss how modern bridge sampling techniques can be used to condition shape stochastic processes on observed data, thus allowing for statistical inference from observed shape data.

\section{Shape spaces}
\label{sec:shape_spaces}
Shapes do not support the natural operations that one associates with vector spaces, which necessitates care when defining spaces of shapes, both with respect to topological, metric, and geometric properties. Here, we outline two of the most common viewpoints, outer and inner shape models, as these are in focus in the remainder of the paper. A more detailed discussion of common shape spaces is given in \cite{bauerOverviewGeometriesShape2014}.

\subsection{Outer shape models}
\label{sec:outer_shape_models}
Outer shape models take their foundation in the pattern approach of Grenander (\cite{grenanderGeneralPatternTheory1994}) and the Large deformation diffeomorphic metric mapping (LDDMM) framework of Trouve, Younes and Joshi (\cite{trouveInfiniteDimensionalGroup1995,younesComputableElasticDistances1998,joshiLandmarkMatchingLarge2000}). The name stems from shapes being subsets of a larger manifold, typically $\R^d$, $d=2,3$, and deformations of this entire space acting on the shape to produce orbit shape spaces. Specifically, we start with an appropriate subgroup $G\subset\Diff(\R^d)$ of diffeomorphisms of $\R^d$ and define a left action of elements of $G$ on the shape space $\cS$. An example of this is for configurations of distinct landmarks $x= (x_1,\ldots,x_n)$, $x_i\in\R^d$, where the action is $\phi.x= (\phi(x_1),\ldots,\phi(x_n))$ for $\phi\in G$. Similarly, for shapes represented as functions $s:M\to\R^d$ of a (compact) domain manifold $M$ to $\R^d$, the action is also by composition $\phi.s=\phi\circ s$. Typical examples of this are $M=\S^1$ for closed curves and $M=\S^2$ for surfaces. For images $I:\R^d\to\R$ (grayscale) or $I:\R^d\to\R^3$ (color), the action is by inverse composition from the right, i.e., $\phi.I=I\circ\phi^{-1}$.

Let $G$ be an appropriate such subgroup of diffeomorphisms, e.g. a subgroup of diffeomorphisms with bounded difference to the identity including all derivatives, or with $H^\infty$ difference to the identity (\cite{michorZooDiffeomorphismGroups2013}) so that $G$ is an (infinite-dimensional) Lie group. We can then define a Riemannian metric on $G$ from a quadratic form on the Lie algebra $\mathfrak{g}$, extending it to a metric on $G$ by right-invariance. Since $\mathfrak{g}$ can be associated with vector fields on $\R^d$, the metric is generally chosen of the form $\left<v,w\right>=\left<Lv,w\right>_{L^2(\R^d)}$, where $L$ is a linear operator, denoted the inertia operator. Sobolev metrics arise in this form when $L$ is a differential operator. Geodesics on $G$ for such right-invariant metrics are governed by the EPDiff equations (\cite{holmSolitonDynamicsComputational2004a}).

Because of the right-invariance of the resulting metric, it induces a Riemannian structure on $\cS$ so that the map $g\mapsto g.s_0$ for fixed $s_0\in\cS$ becomes a Riemannian submersion. One case often considered is the landmark configuration shape spaces $\cS_n=\{(x_1,\ldots,x_n)\in\R^d|x_i\neq x_j\text{ for }i\neq j\}$, where the induced metric takes an explicit form in terms of the cometric 
\begin{equation*}
    \left<p,\tilde{p}\right>_x = \sum_{i,j=1}^n p_i^T k(x_i,x_j)\tilde{p}_j
\end{equation*}
for covectors $p,\tilde{p}\in(\R^d)^n$ and a Green's kernel $k:\R^d\times\R^d\to\R^d\otimes\R^d$ of $L$. Geodesic equations for such landmark equations can be found by defining the Hamiltonian $H(x,p)=\frac12\left<p,p\right>_x$ using the cometric and solving the Hamilton-Jacobi equation
\begin{equation}
    \begin{split}
    dx_{t,i} &= \nabla_{p_{t,i}}H(x_t,p_t)dt = \sum_{j=1}^n k(x_i,x_j)p_{t,j}dt \\
    dp_{t,i} &= -\nabla_{x_{t,i}}H(x_t,p_t)dt = -\sum_{j=1}^n D_1(k(x_i,x_j)p_{t,j})^Tp_{t,i}dt .
    \end{split}
    \label{eq:landmark_geodesic}
\end{equation}
The initial conditions consists of the initial position $x_0$ and initial momentum $p_0$. The landmark configuration evolution $x_t$ in solutions to \eqref{eq:landmark_geodesic} can also be found by solving a geodesic equation on $G$ with appropriate initial conditions to get a solution $\phi_t$ and setting $x_t=\phi_t.x_0$. Here $\phi_t$ is a time-parametrized family of diffeomorphisms inducing the same dynamics on $\cS_n$ as the system \eqref{eq:landmark_geodesic}. As such, the landmark geodesic equations are inspiration for properties of stochastic processes on shape spaces we define below: The dynamics arise from shape dynamics in the form of $\phi_t$ that are not discretized and independent on the choice of landmarks, and dynamics on $G$ can be recovered from landmark dynamics in the limit when increasing the number of landmarks. We will see stochastic landmark dynamics in section \ref{sec:kunita_flows} using Kunita flows, and later, in section \ref{sec:other_stochastic_processes}, we will describe examples of stochastic perturbations of the landmark geodesic equations.

The above described relation between $x_t$ and $\phi_t$ can be realized from a horizontal lift from $\cS_n$ to $G$. To exemplify this when the number of landmarks is changed, consider a geodesic $x_t$ on a landmark space $\cS_n$ with initial momentum $p_0$ where e.g. the last $l$ components of $p_0$ are zero. The horizontal lift $\phi_t$ of $x_t$ to $G$ is a geodesic on $G$ with initial momentum supported on $n-l$ Dirac measures $\delta_{x_{0,i}}$, $i=1,\ldots,n-l$. This geodesic could equivalently be realized from a horizontal lift of a corresponding landmark configuration in $\cS_{n-l}$. The trajectories of the two geodesics on $\cS_n$ and $\cS_{n-l}$ respectively will in this case be the same for the first $n-l$ landmarks. One can concretely see this from the Hamiltonian equations \eqref{eq:landmark_geodesic} by considering the effect of the last $l$ components of $p_0$ being $0$.
Similarly, adding landmarks with zero initial momentum will not change the dynamics. In general, however, lifts of geodesics from $\cS_n$ to $G$ can have initial conditions supported on $n$ Diracs whereas lifts of geodesics from $\cS_{n-l}$ to $G$ will have initial conditions supported on at most $n-l$ Diracs so geodesics can be different in the two cases. 

When a curve $\phi_t$ on $G$ is already defined, the induced curve $x_t=\phi_t.x_0$ on $\cS_n$ is independent of the number of landmarks in the sense that, for $l\le n$ with $\tilde{x}_0=(x_{0,1},\ldots,x_{0,l})$, the curve $\tilde{x}_t=\phi_t.\tilde{x}_0$ is the same as the curve $x_t$ for the first $l$ landmarks. This will specifically be the case for the stochastic Kunita flows that we consider below. In these cases, trajectories are defined individually from $\phi_t$ acting on each landmark, and the landmarks do not ``sense'' the number of landmarks. This is in contrast to other stochastic processes on shape spaces, e.g. the Riemannian Brownian motion described in section~\ref{sec:other_stochastic_processes}, where the landmarks interact and changing the number of landmarks will change the dynamics.

\subsection{Inner shape models}
In contrast to outer shape models, inner shape models are defined directly on the shape space instead of referring to diffeomorphisms of an embedding space. Let $M$ be a compact manifold and consider spaces of parametrized shapes including immersions $\mathrm{Imm}(M,\R^d)$ and embeddings $\mathrm{Emb}(M,\R^d)$. Typical examples would be closed curves $s:M\to\R^d$ with $M=\S^1$ and closed surfaces $s:M\to\R^d$, $M=\S^2$.
These spaces can be equipped with Sobolev metrics of different orders including elastic metrics (\cite{mioShapePlaneElastic2007,younesMetricShapeSpace2008,jermynElasticShapeMatching2012,jermynElasticShapeAnalysis2017}) and $H^2$-metrics (\cite{hartmanElasticShapeAnalysis2023}). The shapes are parametrized since composing with a diffeomorphism of $M$ will result in a different shape, even though the subset $s(M)$ is the same. Quotienting out the action of diffeomorphisms of $M$ results in the (non-parametrized) shape spaces $\mathrm{Imm}(M,\R^d)/\Diff(M)$. If the metric is invariant under the action, this results in a metric on the shape space being induced from the metric on $\mathrm{Imm}(M,\R^d)$. The space of parametrized shapes is often denoted a pre-shape space, while the quotient is called a shape space.

There is extensive literature on inner shape models, and more details can be found in e.g. \cite{michorOverviewRiemannianMetrics2007}. We will see examples of stochastic processes defined directly on inner shape models in section \ref{sec:other_stochastic_processes}. Since diffeomorphisms act on curves and surfaces, Kunita flows also induce stochastic processes on inner shape spaces.

\section{An axiomatic approach to shape stochastics}
\label{sec:axiomatic_approach}
We here take an axiomatic approach to shape stochastics. In the present context, we define a shape stochastic process as a stochastic process that
\begin{enumerate}
    \item is defined independently of the specific representation or discretization of the shape.
    \label{item:representation_consistency}
    \item preserves shape structure: When the process is started from a shape, it should keep being a shape for any finite time. 
    \label{item:shape_structure_preservation}
    \item is equivariant to the action of rigid transformations groups and reparameterization groups: The dynamics should not change if the shape is transformed by rigid transformations or if it is reparameterized, thus allowing the dynamics to descend to quotients of (pre-)shape spaces.
    \label{item:rigid_transformation_invariance}
    \item can be recovered from discretizations: The process should be recoverable from discretized processes in the limit as the discretization is refined.
    \label{item:discretization_recovery}
\end{enumerate}

Below, we discuss and formalize these properties. See also Figure~\ref{fig:shape_process_properties} for a schematic illustration of the intuition behind the properties. We assume the processes are defined on a standard probability space $(\Omega, \mathcal{F}, \Pb)$. As we in the end are interested in the statistical properties of the shape processes, the convergence properties are formalized as statements in distribution. When we have a map $f:\cS\to\cS_1$ between two shape spaces, we will use the notation $f_*s_t$ for the pushforward of a process $s_t$ in $\cS$ by $f$. For each $t$, this means that the random variable $s_t$ is mapped to the random variable $f\circ s_t$, i.e. we just apply $f$ to the shape process. The law of $f_*s_t$ is then $\Pb(f_*s_t\in A) = \Pb(s_t\in f^{-1}(A))$ for measurable subsets $A\subset\cS_1$.

\begin{figure}[h]
    \centering
    \includegraphics[width=.9\textwidth]{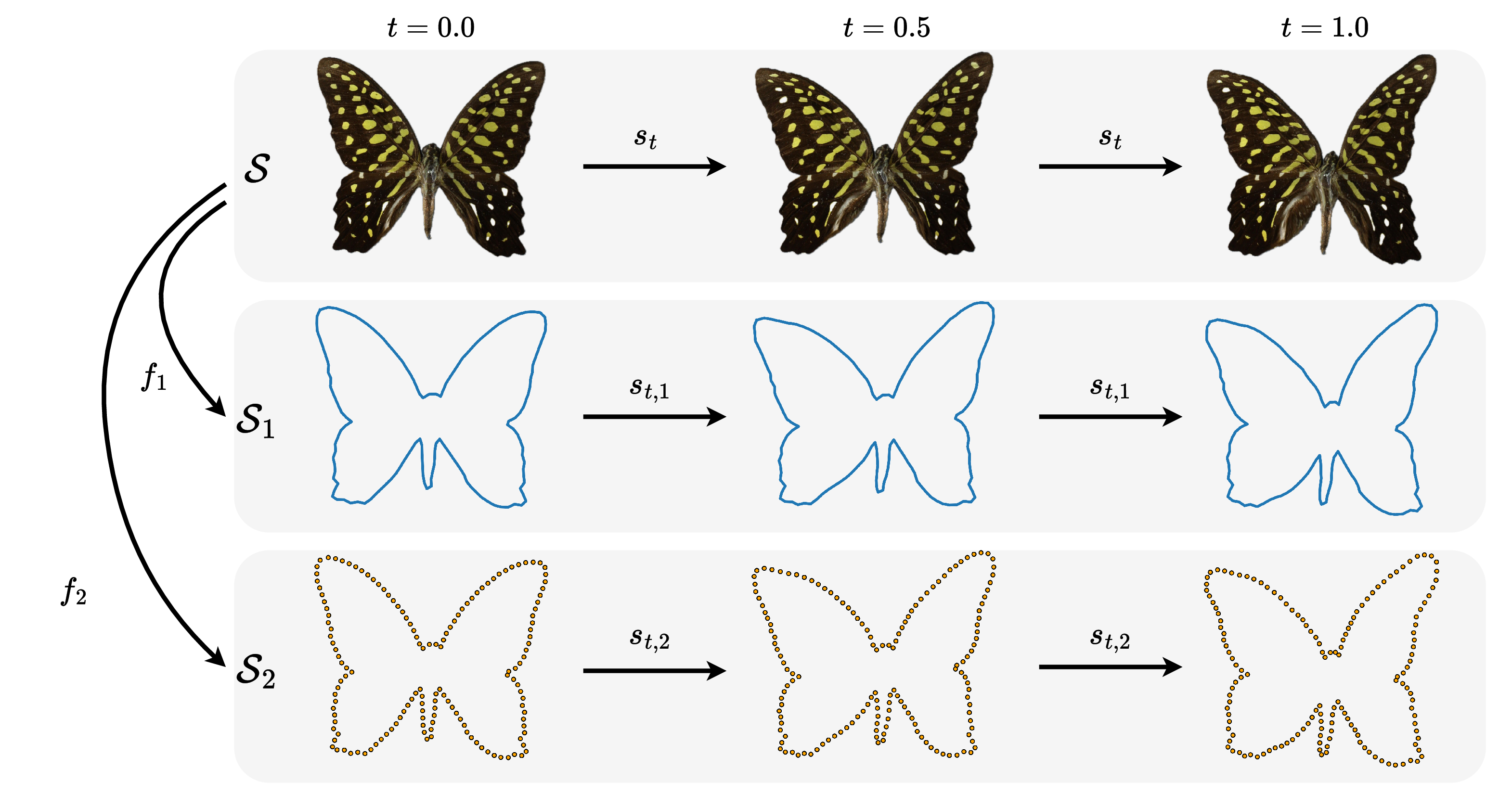}
    \hspace{10pt}
    \caption{Schematic illustration of the axiomatic properties of shape stochastic processes. The first row shows a butterfly changing shape with a time-continuous stochastic process. The second row shows the shape process represented by the outline of the butterfly as a curve, and the third row shows the process represented as discrete points on the outline. The consistency between the top row and the bottom rows illustrates representation independence (property \ref{item:representation_consistency}), while the preservation of the curve structure across time illustrates shape structure preservation (property \ref{item:shape_structure_preservation}). The discretization recovery property (property \ref{item:discretization_recovery}) is illustrated by the fact that the discrete points converge to the continuous curve as the discretization is refined. The shapes are all produced by the action of a Kunita flow on the butterfly image at $t=0$.}
    \label{fig:shape_process_properties}
\end{figure}

\subsection{Representation and discretization independence}
A common approach to define shape dynamics is to define the processes directly on the representation space, for example on landmark configurations by perturbing each landmark. However, this often leads to a dependence on the representation, e.g. the specific number of landmarks. Whether a surface is represented as a surface, by a set of landmarks on the surface, or as an image from which the surface is segmented, the dynamics should ideally be equivalent up to the loss of information specific to each representation. The first property covers this representation independence.

Let $\cS$ be a shape space and $s_t$ a stochastic process in $\cS$. We say that a process $s_{1,t}$ in a shape space $\cS_1$ is a representation of $s_t$ if $s_{1,t}$ is a pushforward of $s_t$, i.e. there exists a map $f_1:\cS\to\cS_1$ such that $s_{1,t}=(f_1)_*s_t$. With this, we define a shape process $s_{1,t}$ to be independent of its specific representation if there exists a shape process $s_t$ defined without using properties of $\cS_1$ or $f_1$ such that $s_{1,t}$ is a representation of $s_t$.

We can think of $s_t$ as the underlying shape process and $s_{1,t}$ as particular representation, possibly a discrete, numerical representation. If we have two different representations $s_{1,t}$ and $s_{2,t}$, they will both be pushforwards of the same underlying shape, and we are ensured that the dynamics represent $s_t$ to the extent the maps $f_1,f_2$ preserve the shape properties. 

At this point, we do not require the representation to enable recovery of the underlying shape process to a specific degree, i.e. the map $f_1$ can in principle be trivial. The ability of the representation to actually represent the underlying shape process is captured by property (\ref{item:discretization_recovery}).

\subsection{Shape structure preservation}
We define a stochastic process $s_t$ with $s_0\in\cS$ to be shape structure preserving if, for all $t\geq 0$, $s_t\in\cS$.

Shape often represent continous embedded objects, e.g. curves and surfaces. For such shapes, the property implies that, for embedded shapes in non-trivial situations,
    \begin{itemize}
        \item[1.] close points on the shape will be correlated: Infinitely close points moving independently cannot preserve continuity.
        \label{item:shape_structure_preservation_correlated}
        \item[2.] the process will be state-dependent: Since the shape will change, points initially having independent dynamics can move close together and thus be correlated because of 1.
        \label{item:shape_structure_preservation_state_dependent}
    \end{itemize}

\subsection{Rigid transformation and reparametrization equivariance}
Let $\SE(d)$ be the special Euclidean group of translations and rotations acting on the shape space $\cS$ with action $g.s$, $g\in\SE(d)$. Let $s_t^{s_0}$ be a stochastic process in $\cS$ started at $s_0\in\cS$. The process is equivariant to the action of $\SE(d)$ on $s_0$ if the law of $s_t^{g.s_0}$ is the same as the law of $g.s_t^{s_0}$ for $g\in\SE(d)$.

Similarly, let $\Diff(M)$ be the group of diffeomorphisms of a manifold $M$ with the right action $s.g$, $g\in\Diff(M)$. We say that the process is equivariant to the action of $\Diff(M)$ if the law of $s_t^{s_0.g}$ is the same as the law of $s_t^{s_0}.g$ for $s_0\in\cS$ and $g\in\Diff(M)$. This property is relevant for inner shape models $s:M\to\R^d$, where the action of $\Diff(M)$ is reparameterizing the shape.

\subsection{Recovery from discretizations}
Let $\cS$ be a shape space, let $\mathcal P$ denote a class of finite dimensional discretizations, and let $P\in\mathcal P$ be a specific discretization, e.g. a representation of the shapes at a finite number of points.
Let $\cS_P$ represent the discretized shape space with map $f_P:\cS\to\cS_P$ discretizing the shapes. For a shape process $s_t$, we say that the discretizations $s_{P,t}=f_P(s_t)$ allows $s_t$ to be recovered if the law of $s_t$ is determined when $s_{P,t}$ is known for all such finite dimensional representations $P\in\mathcal P$. 

\begin{remark}[Deterministic case, outer shape models]
While the properties above are defined for and focus on stochastic processes, we can also consider the case of deterministic shape changes, in particular since the outer shape models with action of diffeomorphisms serve as inspiration for the properties. 

If we consider the landmark shape spaces $\cS_n$ as orbit spaces of a group $G$ and with induced metric as described in section~\ref{sec:outer_shape_models}, geodesics on $\cS_n$ are projections of geodesics in $G$. A geodesic $x_t$ in $\cS_n$ is thus a representation of a geodesic $\phi_t$ in the sense of proerty (\ref{item:representation_consistency}): $
\phi_t$ maps to $x_t$ via the action $x_t=\phi_t.x_0$. For sufficiently smooth metrics on $G$, $x_t$ will stay in $\cS_n$ for all $t\geq 0$ and property (\ref{item:shape_structure_preservation}) will hold. Specifically, for such metrics, $\cS_n$ will be geodesically complete (\cite{bauerOverviewGeometriesShape2014,habermannCharacterizationGeodesicCompleteness2025}) and landmark configurations changing geodesically will not see landmarks collide in finite time. The rigid transformation and reparameterization equivariance property (\ref{item:rigid_transformation_invariance}) holds for suitably invariant metrics analogously to the stochastic case as we will see below. Furthermore, geodesics on $\phi_t$ can be recovered from geodesics on $\cS_n$ for increasing $n$ under mild conditions if the landmarks are sufficiently dense in the domain (\cite{jacobsHigherorderSpatialAccuracy2014}). Therefore, the outer shape models can be seen as a deterministic inspiration for the case of stochastic processes.
\end{remark}

\section{Kunita flows}
\label{sec:kunita_flows}
We now focus on a concrete example of a family of processes satisfying the properties (\ref{item:representation_consistency})-(\ref{item:discretization_recovery}), shape processes induced by Kunita flows.

Kunita flows are stochastic processes of diffeomorphisms $\phi_t\in\Diff(\R^d)$ such that the increments of point trajectories $\phi_t(x),\phi_t(y)$, $x,y\in\R^d$ are correlated with local characteristics that are at least continuous, and where the drift of the point trajectories is also spatially continuous. Below, we outline the construction following \cite{kunitaLecturesStochasticFlows1986,kunitaStochasticFlowsStochastic1997}, and we refer to these works for full details.

Let $(\Omega,\mathcal F,\Pb)$ be a standard probability space. Let $\phi_{s,t}:\R^d\times\Omega\to\R^d$, $0\leq s\leq t$ be an indexed family of random fields where, for each $s,t,\omega$, the map $\phi_{s,t}(\cdot,\omega):\R^d\to\R^d$ is measurable. We will skip $\omega$ in the notation of those maps so that $\phi_{s,t}$ are random maps $\R^d\to\R^d$. Then the family is called a stochastic flow if 1) $s,t,x\mapsto\phi_{s,t}(x)$ is continuous in $s,t,x$ in probability, 2) $\phi_{s,s}=\Id_{\R^d}$ for all $s$, and 3) $\phi_{s,t}\circ\phi_{t,u}=\phi_{s,u}$ for all $s\leq t\leq u$. If, in addition, all increments $\phi_{t_i,t_{i+1}}$ are independent for any $0\leq t_0\leq t_1\leq\cdots\leq t_n$ and $\phi_{s,t}(x)$ is continuous in $t$ a.s., then $\phi_{s,t}$ is called a Brownian flow.

Under additional technical assumptions, $\phi_{s,t}$ can be shown to be a Brownian flow of homeomorphisms meaning that 1) $\phi_{s,t}(x)$ is continuous in $s,t,x$ a.s.; and 2) $\phi_{s,t}$ is a homeomorphism a.s. If $\phi_{s,t}$ is a $C^l$, $l\in\N$, diffeomorphism a.s., then $\phi_{s,t}$ is called a Brownian flow of $C^l$-diffeomorphisms.

For Brownian flows, the entities
\begin{align*}
    b_t(x) &= \lim_{h\downarrow 0}\frac1h\EE[\phi_{t,t+h}(x)-x], \\
    a_t(x,y) &= \lim_{h\downarrow 0}\frac1h\EE[(\phi_{t,t+h}(x)-x)(\phi_{t,t+h}(y)-y)^T]
\end{align*}
are denoted the local characteristics of the flow. For the present purposes, we can denote them the drift and the covariance kernel of the flow, respectively. Kunita then shows the following smoothness characterization of a stochastic flow of homeomorphisms from the local characteristics.

\begin{theorem}
    Let $\phi_{s,t}$ be a Brownian flow of homeomorphisms. If $b_t$ and $a_t$ are $C^l$ in $x$ and $y$ with bounded $l$th order derivatives in both $x$ and $y$ simultaneously, then $\phi_{s,t}$ is a Brownian flow of $C^{l-1}$-diffeomorphisms.  
    \label{thm:kunita_smoothness}
\end{theorem}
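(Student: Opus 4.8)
The plan is to use Kunita's representation of a Brownian flow as the solution of a stochastic differential equation driven by a Brownian vector field, and then to propagate the spatial regularity of the local characteristics to the flow by differentiating this equation and applying Kolmogorov's continuity criterion to the resulting variational system.

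First I would invoke the correspondence between Brownian flows and Brownian vector fields: to the flow $\phi_{s,t}$ one associates a continuous semimartingale field $F(x,t)$ (a Brownian motion with spatial parameter) whose local characteristics coincide with $(b_t(x),a_t(x,y))$ and with respect to which $\phi_{s,t}(x)=x+\int_s^t F(\phi_{s,u}(x),du)$. The first substantive step is to show that the hypothesis --- $b_t$ and $a_t$ being $C^l$ in space with bounded $l$th derivatives --- forces $F(\cdot,t)$ to admit a version that is $C^l$ in $x$. This is itself a Kolmogorov-type argument: the covariance of spatial increments of $F$ and of its formal derivatives is controlled directly by the difference quotients of $a_t$, so bounds on the derivatives of $a_t$ supply the moment estimates needed to produce and continuify the spatial derivatives of the field.

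Next I would differentiate the flow equation formally in the initial point $x$. The first derivative $J_{s,t}(x)=\partial_x\phi_{s,t}(x)$ should satisfy the linear (variational) equation $dJ_{s,t}(x)=DF(\phi_{s,t}(x),dt)\,J_{s,t}(x)$ with $J_{s,s}=\Id$, and higher derivatives satisfy analogous linear equations whose inhomogeneous terms involve lower-order derivatives of $\phi$ and of $F$. To justify that these are genuinely the derivatives of the flow, I would estimate moments of the difference quotients $h^{-1}(\phi_{s,t}(x+he_i)-\phi_{s,t}(x))-J_{s,t}(x)e_i$, together with spatial increments of $J_{s,t}$ itself, obtaining bounds of the form $\EE|\cdots|^p\le C|h|^{\gamma p}$ that are uniform enough to invoke Kolmogorov's continuity theorem jointly in $(s,t,x)$. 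Iterating this up to order $l-1$ yields a modification of $\phi_{s,t}$ that is $C^{l-1}$ in $x$ with derivatives continuous in $(s,t,x)$.

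The main obstacle is the moment analysis underlying the Kolmogorov step, and in particular understanding why exactly one derivative is lost. The estimates on the difference quotients are obtained by Gronwall-type arguments applied to the variational equations, using Burkholder--Davis--Gundy to convert the martingale parts into integrals of the local characteristics; the bounded $l$th derivatives of $a_t,b_t$ are precisely what make these Gronwall constants finite and uniform. The loss from $C^l$ to $C^{l-1}$ is structural: Kolmogorov's criterion upgrades an $L^p$ modulus estimate on the $(l-1)$th difference quotient into a continuous $(l-1)$th derivative only at the cost of the H\"older margin supplied by the $l$th-order control, so the top derivative of the characteristics is consumed in securing continuity of the $(l-1)$th derivative of the flow. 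Finally, to promote ``$C^{l-1}$ map'' to ``$C^{l-1}$ diffeomorphism'', I would use that $\phi_{s,t}$ is already a homeomorphism by hypothesis, so it suffices to show the Jacobian $J_{s,t}(x)$ is everywhere invertible: since $J_{s,t}$ solves a linear matrix equation its determinant satisfies a scalar linear equation and hence never vanishes, and applying the same regularity argument to the backward flow $\phi_{s,t}^{-1}$ (whose characteristics inherit the same smoothness) closes the proof.
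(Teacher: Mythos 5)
The paper does not prove this theorem itself; it is quoted verbatim from Kunita's monographs, so there is no in-paper argument to compare against. Your outline is, however, a faithful reconstruction of Kunita's actual proof: representing the flow as $\phi_{s,t}(x)=x+\int_s^t F(\phi_{s,u}(x),du)$ for a Brownian motion with spatial parameter whose local characteristics are $(b_t,a_t)$, transferring the regularity of $a_t,b_t$ to $F$ by Kolmogorov-type moment estimates, deriving variational equations for the spatial derivatives, and controlling difference quotients via Burkholder--Davis--Gundy and Gronwall before a final Kolmogorov continuity step. Your diagnosis of why exactly one derivative is lost --- the $l$th-order control on the characteristics is spent supplying the H\"older margin needed to upgrade the $(l-1)$th difference quotients to a continuous derivative --- is also the correct structural reason; with an additional H\"older assumption on the $l$th derivatives one would recover a $C^l$ flow, which is the form the result takes elsewhere in Kunita's book.

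One caveat on your closing step: you propose to ``apply the same regularity argument to the backward flow $\phi_{s,t}^{-1}$, whose characteristics inherit the same smoothness.'' That last clause is not quite right. The backward flow satisfies a backward SDE whose drift is $-b_t$ plus a correction term built from first derivatives of $a_t$, so its local characteristics are a priori only $C^{l-1}$, and naively re-running the argument would cost another derivative. The clean way to finish --- and the one consistent with the rest of your sketch --- is the one you mention first: $\phi_{s,t}$ is already a homeomorphism, its Jacobian $J_{s,t}$ solves a linear matrix SDE so $\det J_{s,t}$ solves a scalar linear equation and never vanishes, and the inverse function theorem then gives $C^{l-1}$ regularity of $\phi_{s,t}^{-1}$ for free. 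With that adjustment the proposal is a correct account of the standard proof.
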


\subsection{Semimartingales and stochastic integrals}
To construct Brownian flows of diffeomorphisms from stochastic differential equations (SDEs), we first need to discuss martingales and semimartingales in this context. 
A random field taking values in $C(\R^d,\R^d)$ is called $C$-valued. A $C$-valued random field $M_t$ is called a $C$-valued martingale if $M_t(x)$ is an $\R^d$-valued martingale for each $x$. If, for any multi-index $\alpha$ with $|\alpha| \leq l$, $D_x^\alpha M_t$ is a $C$-valued martingale, then $M_t$ is called a $C^l$-valued martingale.

Let $S_t$ be a continuous, $C$-valued random field of the form
\begin{equation*}
    S_t = S_0 + M_t + V_t
\end{equation*}
with $C$-valued martingale $M_t$, for each $x$, $V_t(x)$ is a process of bounded variation, and $M_0(x)=0, V_0(x) = 0$. Then $S_t$ is denoted a $C$-valued semimartingale. Similarly to martingales, if $S_t$ is a $C^l$-valued random field,  $M_t$ is a $C^l$-valued martingale and $D_x^\alpha V_t(x), |\alpha| \leq l$ is of bounded variation, then $S_t$ is called a $C^l$-valued semimartingale.

Let $a_t,b_t$ be such that
\begin{align*}
    V(x,t) &= \int_0^t b_r(x)dr, \\
    \left< M(x,t), M(y,t)^* \right> &= \int_0^t a_r(x,y)dr
    \ .
\end{align*}
Then $a_t,b_t$ are called the local characteristics of the semimartingale.

From $S_t$, one obtains the stochastic differential equation $d\phi_t=S_{dt}(\phi_t)$ of which $\phi_t$ is a solution if it satisfies
\begin{equation}
    \phi_t(x) = x + \int_s^t S_{dr}(\phi_r(x))
    \label{eq:stochastic_integral}
\end{equation}
where $\int_s^t S_{dr}(\phi_r(x))$ is the stochastic integral of $S_t$ with respect to $\phi_t$. See \cite{kunitaLecturesStochasticFlows1986} for the concrete construction of this integral. With this, Kunita shows the following results.
\begin{theorem}
    If the local characteristics $a_t,b_t$ are Lipschitz continuous with linear growth, then \eqref{eq:stochastic_integral} has a unique solution with a modification that is a stochastic flow of diffeomorphisms.
\end{theorem}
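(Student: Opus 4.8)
The plan is to follow the classical strategy of constructing the solution by Picard iteration, obtaining a jointly continuous modification via Kolmogorov's continuity criterion, and then promoting this modification to a flow of diffeomorphisms using uniqueness together with the smoothness characterization of Theorem~\ref{thm:kunita_smoothness}.

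First I would fix a starting time $s$ and an initial point $x$ and solve the integral equation \eqref{eq:stochastic_integral} pointwise. Setting $\phi^{(0)}_t(x)=x$ and iterating $\phi^{(n+1)}_t(x)=x+\int_s^t S_{dr}(\phi^{(n)}_r(x))$, the Lipschitz hypothesis on $\alpha,\beta$ together with the Burkholder--Davis--Gundy inequality (to control the martingale part) and the standard bound for the bounded-variation part yields a contraction in an appropriate $L^p$-norm on $[s,T]$ for any finite $T$; the linear growth condition guarantees that the iterates do not explode, so the limit exists globally in time. Uniqueness follows from the same Lipschitz estimate via Gr\"onwall's inequality.

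Next I would establish the moment estimates
\begin{equation*}
\EE\big[\,|\phi_{s,t}(x)-\phi_{s',t'}(x')|^p\,\big]\le C\big(|t-t'|^{p/2}+|s-s'|^{p/2}+|x-x'|^p\big)
\end{equation*}
for $p$ large, again using BDG and the Lipschitz/linear-growth bounds. Kolmogorov's continuity theorem then provides a modification that is jointly continuous in $(s,t,x)$, which is the candidate stochastic flow. The cocycle identity $\phi_{s,t}\circ\phi_{t,u}=\phi_{s,u}$ and $\phi_{s,s}=\Id$ are consequences of uniqueness: both sides solve the same equation with the same initial data, so they coincide almost surely, and continuity upgrades this to a single exceptional null set.

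The hard part will be the homeomorphism and diffeomorphism properties. For injectivity I would derive a negative-moment estimate, bounding $\EE[\,|\phi_{s,t}(x)-\phi_{s,t}(y)|^{-p}\,]$ in terms of $|x-y|^{-p}$, which shows that distinct trajectories do not collide and, combined with a Kolmogorov argument applied to the two-point difference, gives continuity of the inverse; surjectivity and the fact that the inverse is itself a flow follow by solving the backward equation. To pass from homeomorphisms to diffeomorphisms I would differentiate \eqref{eq:stochastic_integral} formally in $x$, obtaining a linear variational SDE for the Jacobian whose coefficients are governed by the spatial derivatives of the local characteristics, and solving this equation identifies the derivative of the flow; equivalently, one checks that the local characteristics $b_t,a_t$ of the constructed flow inherit the regularity of $\alpha,\beta$ and invokes Theorem~\ref{thm:kunita_smoothness} directly. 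The main technical obstacle throughout is the control of negative moments needed for non-collision and invertibility, since these do not follow from the Lipschitz bound alone but require a careful It\^o expansion of $|\phi_{s,t}(x)-\phi_{s,t}(y)|^{-p}$.
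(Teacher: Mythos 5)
The paper does not actually prove this theorem: it is quoted from Kunita's work (\cite{kunitaLecturesStochasticFlows1986,kunitaStochasticFlowsStochastic1997}) immediately after the sentence ``Kunita shows the following results,'' so there is no in-paper argument to compare against. Your outline is essentially the strategy of Kunita's own proof in those references --- Picard iteration with Burkholder--Davis--Gundy and Gr\"onwall for existence and uniqueness, Kolmogorov's criterion for a jointly continuous modification, uniqueness of solutions for the cocycle identity, negative-moment estimates on the two-point motion for non-confluence, and the backward equation for surjectivity --- and all of those steps do go through under the stated Lipschitz and linear-growth hypotheses, giving a flow of homeomorphisms.

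The gap is in the final step. Under mere Lipschitz continuity of $\alpha,\beta$ you cannot differentiate \eqref{eq:stochastic_integral} in $x$: the variational SDE for the Jacobian has coefficients built from the spatial derivatives of the local characteristics, which a Lipschitz function possesses only almost everywhere, and an a.e.-defined linear equation does not identify the derivative of the flow. Your fallback of invoking Theorem~\ref{thm:kunita_smoothness} does not rescue this either: with $l=1$ that theorem yields only a flow of $C^{0}$ diffeomorphisms, i.e.\ homeomorphisms. This defect is inherited from the statement itself, which compresses Kunita's actual results: in his formulation, Lipschitz local characteristics give a flow of homeomorphisms, while one needs the local characteristics to lie in a H\"older class $C^{k,\delta}$ with $k\ge 1$, $\delta>0$ --- so that the variational equation itself has Lipschitz coefficients and your existence argument applies to it --- to conclude that the flow consists of $C^{k}$ diffeomorphisms. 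To close your argument, either strengthen the hypothesis to $C^{1,\delta}$ local characteristics before differentiating, or weaken the conclusion to a flow of homeomorphisms.
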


\subsection{Hilbert space formulation}
We can equivalently think of the map $X_t$ defined by $x\mapsto \phi_t(x)-x$ as an element of the Hilbert space $L^2(D,\R^d)$ with $D$ a bounded subset of $\R^d$. This is similar to the definition of the subgroup $G$ in section \ref{sec:shape_spaces} where the difference between a diffeomorphism and the identity is an element of a Hilbert space. $X_t$ models the displacement of $\phi_t$ from the identity.  Let $W_t$ be a cylindrical Wiener process on $L^2(D,\R^d)$ and consider the Hilbert space SDE 
\begin{equation}
    dX_t=Q^{1/2}(X_t)dW_t \label{eq:kunita_flow_hilbert} 
\end{equation}
where the diffusion operator $Q^{1/2}(X_t)$ on $L^2(D,\R^d)$ is given by
\begin{equation}
Q^{1/2}(X_t)(f)(x)
    =\int_{D}k(\phi_t(x),\zeta)f(\zeta)d\zeta
    =\int_{D}k(X_t(x)+x,\zeta)f(\zeta)d\zeta
    \label{eq:kunita_flow_hilbert_operator}
\end{equation}
for a kernel $k:D\times D\to\R^d\otimes\R^d$, $f\in L^2(D, \R^d)$ and $x \in \R^d$. Being an integral operator, there exists a square of the form
\begin{equation}
    Q(f)(x) = \int_{D}g(x,\zeta)f(\zeta)d\zeta
    \label{eq:kunita_flow_hilbert_operator_square}
\end{equation}
with $g(x,y)=\int_D k(x,\zeta)k(\zeta,y)^T d\zeta$. The corresponding local characteristics are then
\begin{align*}
    b_t(x)&=0 \\
    a_t(x,y)&=g(x,y).
\end{align*}
This formulation gives a direct correspondence between the kernel $k$, its square $g$ and the covariance characteristic $a_t$ of the flow. The kernel $k$ can be chosen to satisfy the required smoothness conditions to ensure that the resulting flow is a flow of diffeomorphisms. As $k$ is often translation and rotation invariant and scalar valued, one can write $k(x,y)=k(r)\Id_d$ where $r=\|x-y\|$ and $k$ is now interpreted as a function $k:\R\to\R$.

One choice of kernel is the squared exponential kernel 
\begin{equation}
\kSqE(r)=\alpha e^{-\frac{r^2}{2\sigma^2}}
\label{eq:gaussian_kernel}
\end{equation}
which results in $\gSqE(r)=\alpha^2\pi^{d/2}\sigma^d e^{-\frac{r^2}{4\sigma^2}}$ being a squared exponential as well. Another common choice are Green's functions of Laplacian and higher order Sobolev operators. For such an operator $L=\alpha^{-1}(\Id-\sigma^2\Delta)^c$, we have
\begin{equation*}
    k(r)=\alpha C(\sigma)\left(\frac{r}{\sigma}\right)^\nu K_\nu\left(\frac{r}{\sigma}\right)
\end{equation*}
where $C(\sigma)^{-1}=2^{c-1}(2\pi)^{\frac{d}{2}}\Gamma(c)\sigma^d$, $\nu=c-d/2$ and $K_\nu$ denotes the modified Bessel function of order $\nu$, see e.g. \cite{adamsFunctionSpacesPotential1999,rasmussenGaussianProcessesMachine2008,micheliMatrixvaluedKernelsShape2014}. These kernels are denoted Mat\'ern or Bessel kernels. To make the value at $r=0$ be equal to $\alpha$ as for the squared exponential kernel, one can use the $r\to 0$ limit value of $K_\nu(r)$ being $2^{\nu-1}\Gamma(\nu)r^{-\nu}$ for $\nu>0$ and redefine $k$ with the changed constants
\begin{equation}
    \kMat(r)=\alpha\frac{2^{1-\nu}}{\Gamma(\nu)}\left(\frac{r}{\sigma}\right)^\nu K_\nu\left(\frac{r}{\sigma}\right)
    .
    \label{eq:laplace_kernels}
\end{equation}
Letting $\hat{k}$ denote the Fourier symbol of $k$, we have $\hat{k}(\omega)=\alpha(1+\sigma^2\|\omega\|^2)^{-c}$ and hence $\hat{g}(\omega)=\alpha^2(1+\sigma^2\|\omega\|^2)^{-2c}$ from which we see that 
\begin{equation*}
    \gMat(r)=\alpha^2 2^{\frac{3d}{2}-2c+1}\pi^{d/2}\sigma^d\frac{\Gamma(c)^2}{\Gamma(c-\frac{d}{2})^2\Gamma(2c)}\left(\frac{r}{\sigma}\right)^{2c-\frac{d}{2}}K_{2c-\frac{d}{2}}\left(\frac{r}{\sigma}\right)
    \ .
\end{equation*}
The kernels are plotted for different values of $\sigma$ and $c$ in Figure~\ref{fig:kernels}.
\begin{figure}[h]
    \centering
    \includegraphics[width=0.32\textwidth]{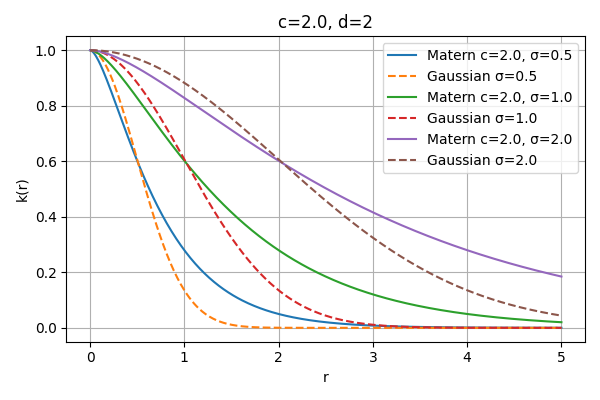}
    \includegraphics[width=0.32\textwidth]{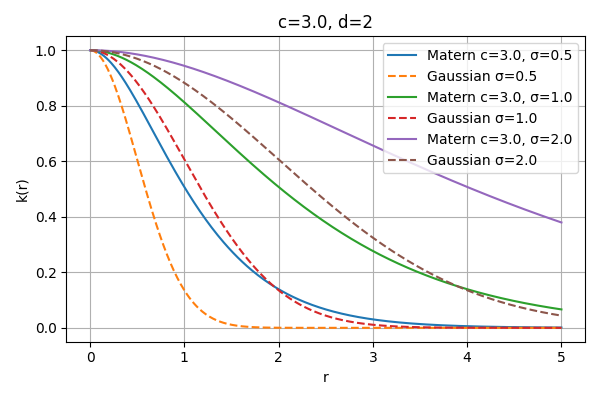}
    \includegraphics[width=0.32\textwidth]{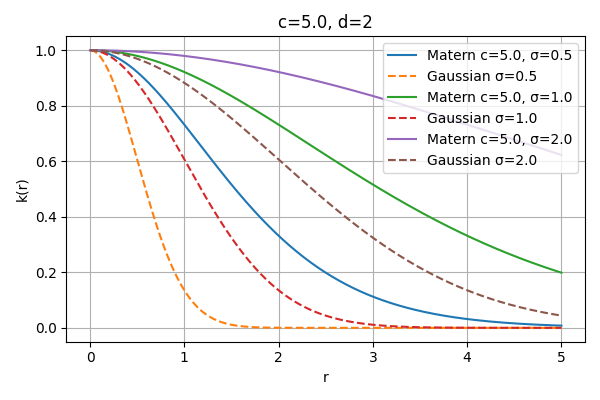}
    \caption{Kernels $\kSqE$ and $\kMat$ for different values of $\sigma$ and $c$ with $\alpha=1$.}
    \label{fig:kernels}
\end{figure}

The Hilbert space view doesn't directly give the relation between smoothness of $k$ and the smoothness of the generated flow, but it gives a way to consider the Kunita flow as a solution to an SDE in a Hilbert space without directly introducing the machinery of Kunita's results.

The Hilbert space formulation in addition gives a natural way to parametrize the flow applied to curves using other bases, e.g. Fourier space representations. This is developed in \cite{bakerConditioningNonlinearInfinitedimensional2024}. In $\R^2$ with a Fourier basis $\{e_n\}_{n=1}^\infty$, $e_n(x)=e^{inx}$ and $g_{l,m}(x_1,x_2)=e^{ilx_1+imx_2}$, writing the SDE \eqref{eq:kunita_flow_hilbert} into basis elements gives
\begin{equation*}
    dX_t = \sum_{n=1}^{\infty}\sum_{l,m=1}^{\infty} \langle e_n,Q^{1/2}(X_t)(g_{l,m})\rangle dw_{l,m}(t)e_n.
\end{equation*}
One obtains a numerical scheme by truncating the sum to a finite number of Fourier modes and by using the fast Fourier transform.

\subsection{Induced shape processes}
Kunita flows of diffeomorphisms induce stochastic processes on shape spaces through actions of diffeomorphisms. If $\phi\in\Diff(\R^d)$ acts on a shape $s$ by $\phi.s$, a Kunita flow $\phi_t$ induces a stochastic process on $s$ through the action $s_t=\phi_t.s$.

For example, in the case of landmarks $x=(x_1,\ldots,x_n)$ in $\cS_n$, a Kunita flow results in the finite dimensional process $x_t=\phi_t.x=(\phi_t(x_1),\ldots,\phi_t(x_n))$. The dynamics of this process are solutions to the SDE
\begin{equation}
    dx_t^i
    =\left(Q^{1/2}(X_t)dW_t\right)(x_t^i)
    =
    \int_{D}k(x_t^i,\zeta)dW_t(\zeta)d\zeta
    \label{eq:kunita_flow_landmark}
\end{equation}
where $x_t^i$ denotes the $i$th landmark of the configuration $x_t$ and $X_t=\phi_t-\Id_d$.

%As above, local characteristics $\alpha$ of $\phi_t$ is here equal to $g$. 
The covariance matrix of the infinitesimal steps of $dx_t$ are then given by the matrix $G(x_t)=[g(x_t^i,x_t^j)]^i_j$ with $g$ from \eqref{eq:kunita_flow_hilbert_operator_square}. If we let $G(x_t)^{1/2}$ be a square root of $G(x_t)$, the stochastic increments $dx_t$ can equivalently be written directly in the landmark space as $G(x_t)^{1/2}dW_t$ where $W_t$ is now an $nd$-dimensional Brownian motion.

\subsection{Kunita flows and shape stochastics}
We now prove that processes on shape spaces induced by Kunita flows satisfy the properties (\ref{item:representation_consistency})-(\ref{item:discretization_recovery}) of section~\ref{sec:axiomatic_approach}. The results follow almost directly from the construction of the Kunita flow and Kunita's results.

\begin{proposition}
    Let $\phi_t$ be a Kunita flow with local characteristics $a_t,b_t$ such that $\phi_t$ is a flow of $C^l(\R^d)$-diffeomorphisms. Let $\phi\in C^l(\R^d)$ act on the shape space $\cS$ by $\phi.s_0$, $s_0\in\cS$. Then $\phi_t$ induces a shape process $s_t=\phi_t.s_0$ satisfying the properties (\ref{item:representation_consistency}) and (\ref{item:shape_structure_preservation}). If the local characteristics are invariant to rigid motions, then property (\ref{item:rigid_transformation_invariance}) holds.
\end{proposition}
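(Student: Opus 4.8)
The plan is to handle the three properties separately, with properties (\ref{item:representation_consistency}) and (\ref{item:shape_structure_preservation}) following directly from the fact that $\phi_t$ is a flow of diffeomorphisms of $\R^d$ constructed independently of any shape space, and property (\ref{item:rigid_transformation_invariance}) reducing to a uniqueness-in-law statement for Kunita flows. For (\ref{item:representation_consistency}), I would exhibit the underlying process as the flow $\phi_t$ itself, viewed as a process in $\Diff(\R^d)$: it is built only from the local characteristics $\alpha,\beta$ and uses no properties of $\cS$ or of the orbit map $o_{s_0}\colon\psi\mapsto\psi.s_0$. Since $s_t=\phi_t.s_0=(o_{s_0})_*\phi_t$, the induced process is a pushforward of $\phi_t$, and any two shape representations (a landmark configuration, a curve, an image) are pushforwards of the single flow $\phi_t$, which is exactly representation independence in the sense of the deterministic remark above. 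For (\ref{item:shape_structure_preservation}), I would fix $t\ge0$ and use that $\phi_t$ is almost surely a $C^l$ diffeomorphism, so it preserves shape structure: injectivity keeps distinct landmarks distinct, so $\phi_t.x\in\cS_n$, and $\phi_t\circ s$ is again an immersion or embedding when $s$ is. Hence $s_t\in\cS$ for all $t$ almost surely.

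The substantive content is property (\ref{item:rigid_transformation_invariance}). The reparameterization half is immediate and pathwise: the left action of $\phi_t$ and the right action of $\Diff(M)$ compose on opposite sides, so $\phi_t.(s_0.h)=\phi_t\circ s_0\circ h=(\phi_t.s_0).h$ and therefore $s_t^{s_0.h}=s_t^{s_0}.h$ identically. For the rigid half, let $g\in\SE(d)$ with $g(x)=Rx+b$, $R\in\SO(d)$, and form the conjugated flow $\psi_t=g\circ\phi_t\circ g^{-1}$; this is again a Brownian flow of $C^l$ diffeomorphisms since $g$ is a smooth affine diffeomorphism and conjugation preserves the flow and independent-increment properties. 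I would then compute its local characteristics from the defining limits. Using $\psi_{t,t+h}(x)-x=R\bigl(\phi_{t,t+h}(g^{-1}x)-g^{-1}x\bigr)$, which follows from $x=Rg^{-1}x+b$, one gets $b_t^{\psi}(x)=R\,b_t^{\phi}(g^{-1}x)$ and $a_t^{\psi}(x,y)=R\,a_t^{\phi}(g^{-1}x,g^{-1}y)\,R^{T}$.

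The invariance hypothesis is precisely the statement that these transformed characteristics coincide with $b_t^{\phi}$ and $a_t^{\phi}$; for the standard choice $k(x,y)=k(\|x-y\|)\Id_d$ the drift vanishes and the induced covariance depends only on $\|x-y\|$ as a multiple of $\Id_d$, hence is fixed both by the distance-preserving $g^{-1}$ and by the conjugation $R(\cdot)R^{T}$. Thus $\psi_t$ and $\phi_t$ share the same local characteristics, and by the uniqueness result for the associated SDE (the theorem guaranteeing a unique flow of diffeomorphisms under Lipschitz, linear-growth characteristics) they have the same law. Writing $g\circ\phi_t=\psi_t\circ g$ and using the left-action identity then gives $g.s_t^{s_0}=(g\circ\phi_t).s_0=\psi_t.(g.s_0)$, which has the same law as $\phi_t.(g.s_0)=s_t^{g.s_0}$, establishing equivariance. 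I expect the only real obstacle to be the local-characteristic computation for $\psi_t$ and phrasing the invariance hypothesis in exactly the equivariant form that makes $b_t^{\psi}=b_t^{\phi}$ and $a_t^{\psi}=a_t^{\phi}$; once the characteristics match, the identity of laws is a direct appeal to Kunita's uniqueness theorem.
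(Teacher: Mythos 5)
Your proposal is correct and follows essentially the same route as the paper's proof: properties (\ref{item:representation_consistency}) and (\ref{item:shape_structure_preservation}) are read off directly from the fact that $\phi_t$ is a flow of diffeomorphisms defined independently of $\cS$ and that the action preserves $\cS$, while property (\ref{item:rigid_transformation_invariance}) is deduced from invariance of the local characteristics. The only difference is one of detail: the paper simply asserts that invariant local characteristics yield equivariance of the flow, whereas you supply the supporting computation (the conjugated flow $g\circ\phi_t\circ g^{-1}$, its transformed characteristics, and the appeal to uniqueness in law), together with the pathwise argument for reparameterization equivariance --- all of which is consistent with, and a legitimate expansion of, the paper's one-line claim.
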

\begin{proof}
    By construction of the Kunita flow and the action, $\phi_t$ induces a process $s_t=\phi_t.s$ where, for all $t$, $s_t\in\cS$. It is therefore shape structure preserving (property (\ref{item:shape_structure_preservation})).

    The induced shape process $s_t$ is by construction a pushforward of the Kunita flow $\phi_t$ which can be regarded a stochastic process on a shape space itself. Because $\phi_t$ is defined without using properties of $\cS$, $s_t$ is independent of the representation (property \ref{item:representation_consistency}).

    If the local characteristics are translation and rotation invariant then $\phi_t$ is equivariant to the action of the translation and rotation groups on the initial value. This is inherited by the induced shape process $s_t=\phi_t.s$ (property \ref{item:rigid_transformation_invariance}).
\end{proof}

We now turn to discretization consistency. The law of a Kunita flow is determined by its finite dimensional distributions. This is equivalent to its motion being determined by the motion of any finite set of landmarks, i.e. that its action $\phi_t.s$ is known for all $s\in\cS_n$ for the landmark configuration space $\cS_n$ and all $n$. However, since the flow is characterized by its local characteristics, the law of the Kunita flow is actually determined only by its 2-point motions, i.e. if its action on $\cS_2$ is known, see \cite[section 4.2]{kunitaStochasticFlowsStochastic1997}. Referring to the discretization property (\ref{item:discretization_recovery}), we see that the law of $\phi$ can be recovered if the family of finite dimensional discretizations $\mathcal P$ includes all 2-point pairs on $\R^d$.

Often, a shape does not cover all points of $\R^d$, i.e. a curve is only a subset of $\R^d$. If the landmarks represent a discretization of such a subset of $\R^d$, we can require $\mathcal P$ to include all 2-points pairs in the subset but not all of $\R^d$. In this case, the law of $\phi_t$ can be recovered for any point in the subset, but it does not allow to recover $\phi_t$ outside the subset. To be precise, we let $s:M\to \R^d$ be a parametrized subset of $\R^d$ and let the discretization family $\mathcal P$ include all points in $s(M)$. Kunita's results then imply the following discretization consistency property:
\begin{proposition}
    Let the discretization family $\mathcal P$ include all 2-point pairs in $s(M)$. Then the law of $s_t=\phi_t.s_0$, $s_0\in\cS$, is determined by the law of the discretized processes $s_{P,t}=f_P(s_t)$ for all $P\in\mathcal P$.
\end{proposition}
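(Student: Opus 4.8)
The plan is to reduce the claim to the cited fact (Kunita, section~4.2) that the law of a Brownian flow is determined by its local characteristics, combined with the observation that these characteristics are encoded in the one- and two-point motions of the flow. First I would record that the law of the shape process $s_t=\phi_t.s_0$ is determined by its finite dimensional distributions, i.e. by the joint laws of the evaluations $(\phi_{t_1}(s_0(u_1)),\ldots,\phi_{t_m}(s_0(u_k)))$ over finite collections of parameters $u_i\in M$ and times $t_j$. These are precisely the $k$-point motions of $\phi_t$ started at configurations of points in $s(M)=s_0(M)$, so it suffices to show that all such $k$-point motions are determined by the data $\{s_{P,t}\}_{P\in\mathcal P}$.

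Second, I would extract the local characteristics from the two-point discretizations. Each $P\in\mathcal P$ is a pair $\{p,q\}\subset s(M)$ and $s_{P,t}=f_P(s_t)=(\phi_t(p),\phi_t(q))$, so knowing the law of $s_{P,t}$ for all $P$ (including the degenerate pairs $p=q$, which recover the one-point motion) yields the joint evolution of every pair of points of $s(M)$. Since the $N$-point motion of a Brownian flow is Markov, observing the full law of these two-point processes recovers their generator, and hence the drift $\beta_t(x)=\lim_{h\downarrow 0}\frac1h\EE[\phi_{t,t+h}(x)-x]$ and the covariance $\alpha_t(x,y)=\lim_{h\downarrow 0}\frac1h\EE[(\phi_{t,t+h}(x)-x)(\phi_{t,t+h}(y)-y)^T]$, at all points $x,y$ that the flow reaches from $s(M)$.

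Finally, I would invoke Kunita's result that $\alpha,\beta$ determine the law of the flow, and in particular all of its $k$-point motions: the generator of the $k$-point motion is the second order operator assembled from $\beta$ at each point and $\alpha$ at each pair of points, so once $\alpha,\beta$ are known along the trajectories issuing from $s(M)$, every $k$-point motion started in $s(M)$ is pinned down. Chaining the three steps shows that the finite dimensional distributions, and hence the law, of $s_t$ are determined by $\{s_{P,t}\}_{P\in\mathcal P}$. The main obstacle is the localization in the second step: one must argue that observing two-point motions started only at pairs in $s(M)$, rather than at all pairs of $\R^d$, still suffices. This works because the induced shape process never evaluates the flow outside the set reachable from $s(M)$, and because the consistency of the $k$-point motions of a Brownian flow guarantees that the characteristics needed to propagate a $k$-point motion are exactly those supplied along the observed two-point trajectories.
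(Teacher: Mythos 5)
Your proposal is correct and follows essentially the same route as the paper, which proves the proposition only informally in the surrounding text: reduce the law of $s_t$ to its finite dimensional distributions, observe these are $k$-point motions of the flow started in $s(M)$, and invoke Kunita's result (section 4.2 of the 1997 book) that the local characteristics, and hence the law of the flow and all its $k$-point motions, are already determined by the 2-point motions. Your explicit treatment of the localization to pairs in $s(M)$ is a welcome elaboration of a point the paper only gestures at ("the law of $\phi_t$ can be recovered for any point in the subset, but it does not allow to recover $\phi_t$ outside the subset"), but it is not a different argument.
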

Therefore, with the assumptions of the proposition, the discretization property (\ref{item:discretization_recovery}) holds for the shape process $\phi_t.s$.

\subsection{Shape variance}
Kunita flows provide a notion of variance for shape processes with the unit-normalized trace of the covariance operator $Q$
\begin{equation*}
    \var(Q)
    =\lim_{|D|\to\infty}\frac{1}{|D|}\Tr(Q_D)
    =\lim_{|D|\to\infty}\frac{1}{|D|}\int_{D}g(\zeta,\zeta)d\zeta
\end{equation*}
where $D$ represents bounded domains in $\R^d$ with volume $|D|$ and $Q_D$ the covariance operator restricted to $D$. This notion stems from the trace of the covariance matrix in finite dimensions, but it is not tied to the specific representation of the shape as is for example of the case when variance is measured on landmark configurations and thus dependent on the specific number of landmarks. The normalization for volume reflects the unboundedness of $\R^d$.

With the kernel being translation invariant, we have 
\begin{equation*}
    \var(Q)
    =
    \lim_{|D|\to\infty}\frac{g(0)}{|D|}\int_{D}d\zeta
    =
    g(0)
\end{equation*}
Plugging in the explicit expression for $\gSqE$ from \eqref{eq:gaussian_kernel}, we get $\var(Q)=\alpha^2\pi^{d/2}\sigma^d$ for the squared exponential kernel. For the Mat\'ern kernels $\kMat$, we have
\begin{equation*}
    \var(Q)
    %=
    %\alpha^2
    %\frac{\Gamma(2c-\frac{d}{2})}
    %{2^d\pi^{d/2}\Gamma(2c)\sigma^d}
    =
    \alpha^2 2^d \pi^{d/2} \sigma^d \frac{\Gamma(c)^2}{\Gamma(\nu)^2} \frac{\Gamma(2c-\frac{d}{2})}{\Gamma(2c)}
\end{equation*}
when $c>d/4$.

In practical applications including the squared exponential and Mat\'ern kernels above, the kernel $k$ equivalently $g$ and hence the variance $\var(Q)$ depend on parameters that can be estimated from observed shape data, e.g. landmark configurations. However, though estimated from specific shape representations, the variance is tied to the shape process itself, and not to the specific representation. We exemplify such estimation in section~\ref{sub:inference}.

To exemplify the notion of variance, Figure~\ref{fig:variance} shows Kunita flows on butterfly wing shapes with different variances using the Mat\'ern kernel with $c=7/2$. The figure shows samples of the shape process induced from the Kunita flow and samples from repeated sampling of the time $t$ distribution $\phi_t.s$ for fixed $t$ and for different parameters and hence values of $\var(Q)$.
\begin{figure}[h]
    \centering
    \includegraphics[width=0.98\textwidth]{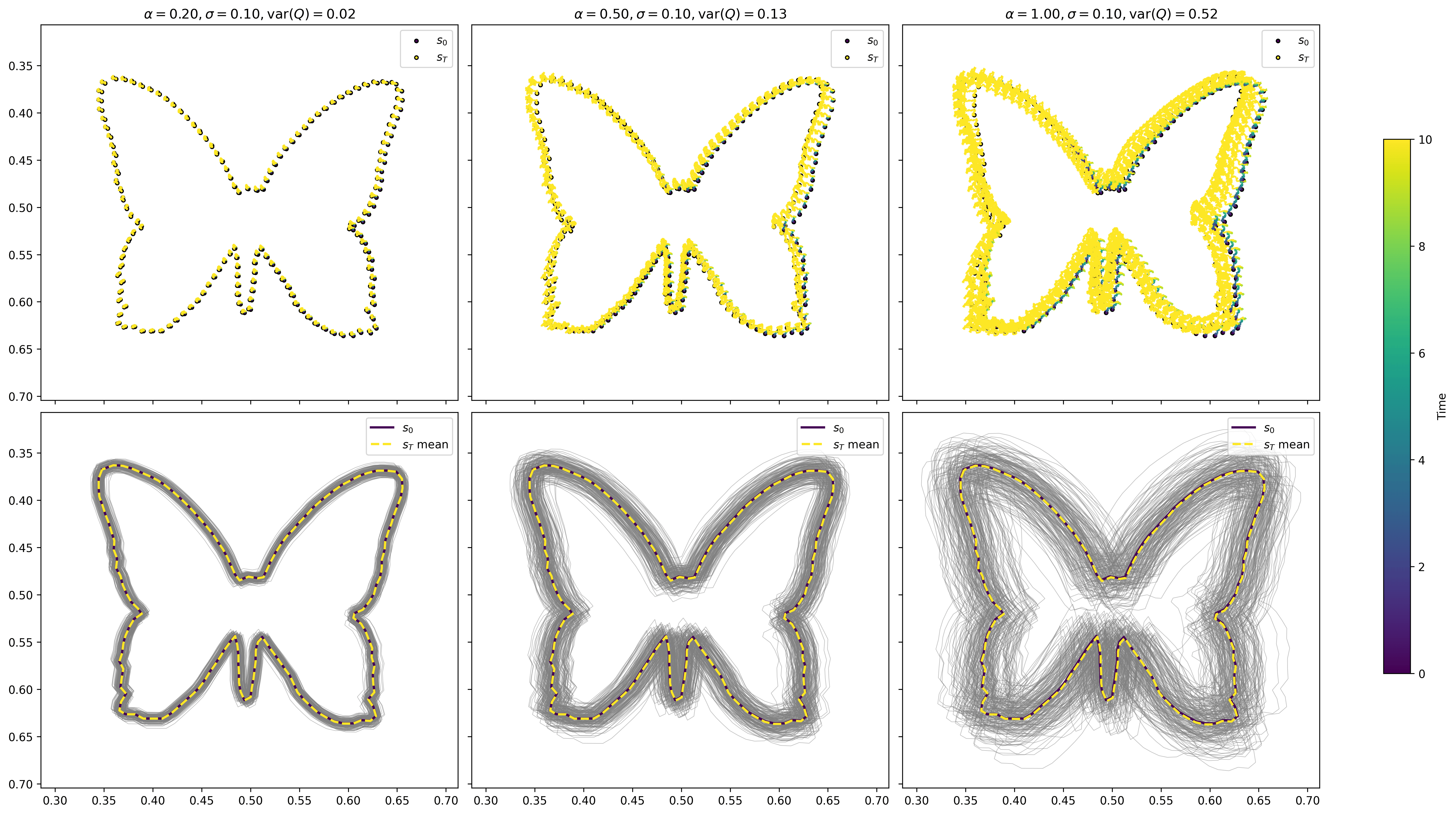}
    \includegraphics[width=0.98\textwidth]{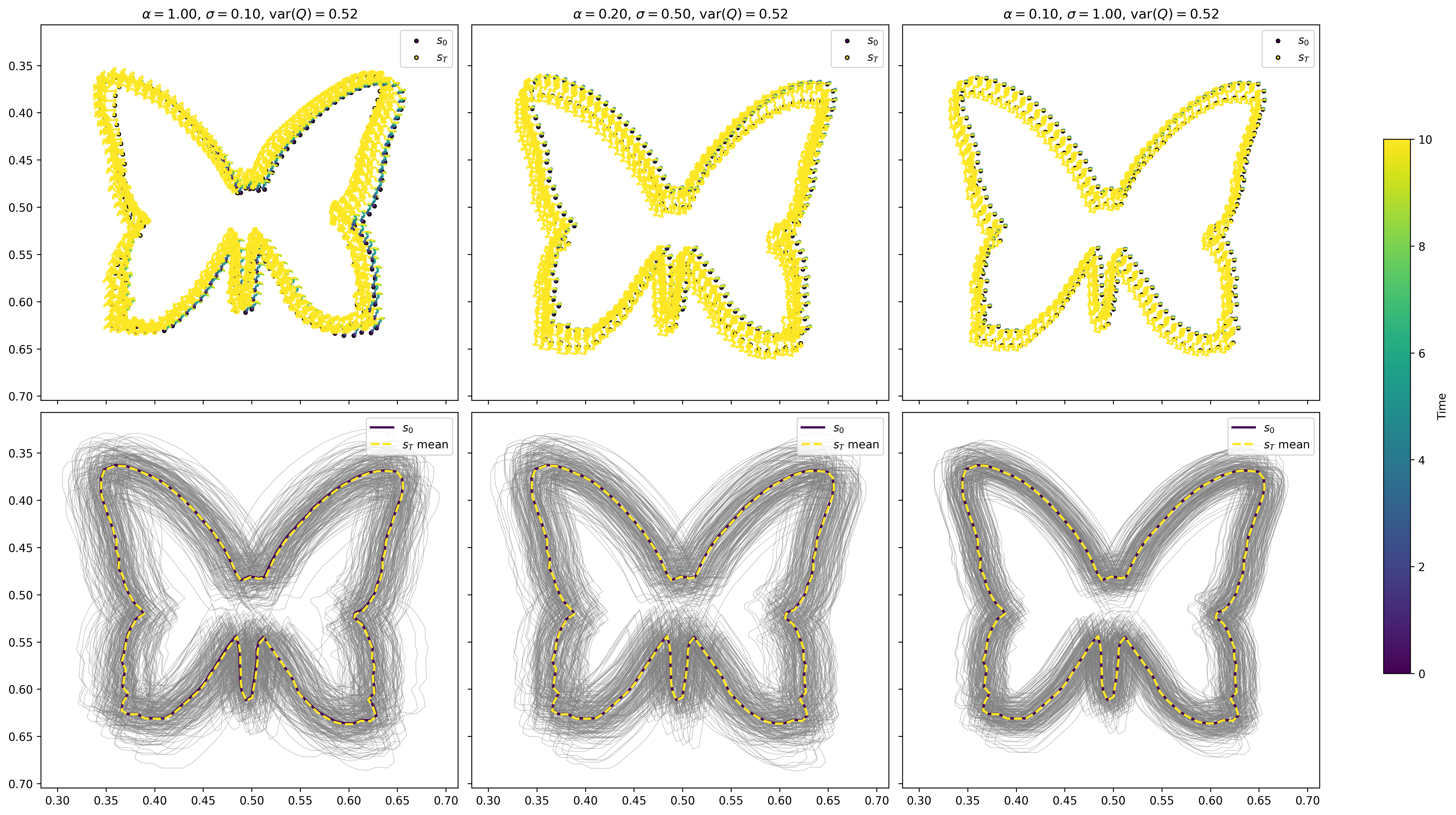}
    \caption{Kunita flows acting on butterfly wing shapes with different combinations of parameters and hence values of $\var(Q)$. Rows 1,3: Samples of the shape process induced from the Kunita flow. Rows 2,4: Samples from repeated sampling of the time $t=10$ distribution $\phi_t.s_0$. Rows 1,2: Different values of $\alpha$ and hence $\var(Q)$ which implies that the total variance in the system to vary. Rows 3,4: Different values of $\sigma$ and with $\alpha$ changed accordingly to keep $\var(Q)$ constant. This keeps the total variance constant while the correlation between landmarks changes.}
    \label{fig:variance}
\end{figure}

\subsection{It\^o vs. Stratonovich}
We have described Kunita flows on It\^o form, but they can equivalently be formulated in Stratonovich form with Stratonovich integral $\circ dW_t$ and with changed drift originating from the It\^o-Stratonovich correction term. Because mappings of Stratonovich processes obey the chain rule, the Stratonovich formulation is sometimes preferable. One example is the stochastic EPDiff shape flows and the fluid dynamics processes from which they arise (\cite{holmVariationalPrinciplesStochastic2015}), where the Stratonovich version is used, see section~\ref{sec:other_stochastic_processes}. Shape flows induced by an It\^o Kunita flow may receive an additional drift term involving the derivative of the action when written as an SDE directly on the shape space, in contrast to Stratonovich flows that, because of the chain rule, will not receive the additional drift. On the other hand, the It\^o formulation can be easier to interpret and implement. 

\section{Other shape stochastic processes}
\label{sec:other_stochastic_processes}
We here survey other related shape stochastic processes and briefly mention other related time series models applicable to shape data, including processes specifically for landmark dynamics, processes for outer shape spaces originating from fluid dynamics, and processes on inner shape representations. In addition to this, several papers focus on theoretical aspects of Kunita flows relevant to applications in shape analysis, including flows of diffemorphisms and Laplace approximation of the transition density (\cite{markussenLaplaceApproximationTransition2009}), large deviation principles (\cite{budhirajaLargeDeviationsStochastic2010}), and most probable paths for Kunita flows (\cite{grongMostProbablePaths2022}). See also \cite{stanevaLearningShapeTrends2017} for construction and statistical estimation of several shape processes.

\subsection{Riemannian Brownian motions}
On finite dimensional Riemannian manifolds, the Riemannian Brownian motion is a fundamental diffusion process. The Riemannian metric gives the Laplace-Beltrami operator $\Delta$, and the Brownian motion is the diffusion process with infinitesimal generator $\frac12\Delta$. In landmark shape spaces $\cS_n$, the corresponding SDE can be written in coordinates
\begin{equation*}
    d x_{t,i} = -\frac{1}{2} \sum_{l,m=1}^n K(x_t)^{l,m} \Gamma(x_t)_{l,m}^i dt + \sqrt{K(q_t)^i} dW
\end{equation*}
where $K(x_t)$ is the kernel matrix $[K(x)]^i_j=k(\|x_i-x_j\|)$ and $\Gamma$ the Christoffel symbols of the Riemannian metrix.

In a shape context, the Riemannian Brownian motion has been considered in for example \cite{sommerBridgeSimulationMetric2017}, where estimation of parameters of the metric from observed data is pursued, and in \cite{habermannLongtimeExistenceBrownian2024} where long-time existence of the process is studied specifically on landmark manifolds. The process is tied to the geometry of the specific landmark space and as such will depend on the number of landmarks, thus it is not defined independently of the specific representation as required by property (\ref{item:representation_consistency}). It is currently an open question if the shape structure preservation property (\ref{item:shape_structure_preservation}) holds for the Riemannian Brownian motion on $\cS_n$ when $n>2$, because ruling out collision of landmarks with this process is non-trivial.

\subsection{Stochastic EPDiff}
In fluid dynamics, a family of stochastic flows introduced in \cite{holmVariationalPrinciplesStochastic2015} include momentum of the fluids with stochastic perturbations of the flow state coupling to the momentum. This results again in a stochastic flow of diffeomorphisms that can act on a shape space as pursued in 
\cite{arnaudonGeometricFrameworkStochastic2019}.

Similarly to the derivation of the EPDiff equations for extremal flows for the LDDMM metric, the stochastic EPDiff equations can be derived from a variational principle, either by perturbing a reconstruction equation by the added stochastics, or by starting from a stochastically perturbed Hamiltonian. The stochastic EPDiff equation preserve many of the properties from the deterministic case, including that the dynamics descend to shape spaces via the action, and that optimal flows on a shape space can be lifted horizontally to flows on the diffeomorphism group. A particular case of this is for landmark configurations where the stochastic EPDiff equations extend the Hamiltonian equations \eqref{eq:landmark_geodesic} from the deterministic case. With an underlying grid of fixed noise fields $\sigma_1,\ldots,\sigma_J$, the stochastic EPDiff landmark equations are given by
\begin{equation*}
    \begin{split}
    dx_{t,i} &= \nabla_{p_{t,i}}H(x_t,p_t)dt + \sum_{j=1}^J \sigma_j(x_{t,i}) \circ dW_{t}^j , \\
    dp_{t,i} &= -\nabla_{x_{t,i}}H(x_t,p_t)dt - \sum_{j=1}^J \nabla_{x_{t,i}} (p_{t,i} \cdot \sigma_j(x_{t,i})) \circ dW_{t}^j .
\end{split}
%\label{eq:epdiff_landmark}
\end{equation*}
The use of the fixed noise fields can be seen as a noise finite-dimensional version of the convolution \eqref{eq:kunita_flow_hilbert_operator}. With zero momentum, the shape EPDiff equations are particular cases of Kunita flows in Stratonovich form.

\subsection{Inner processes}
\cite{trouveShapeSplinesStochastic2012} propose a stochastic model for shape evolution by adding a stochastic term to the momentum equation of the Hamiltonian equations as follows
\begin{equation*}
    \begin{split}
    dx_{t,i} &= \nabla_{p_{t,i}}H(x_t,p_t)dt, \\
    dp_{t,i} &= -\nabla_{x_{t,i}}H(x_t,p_t)dt + dW_{t,i} .
\end{split}
%\label{eq:epdiff_momentum_noise}
\end{equation*}
This can be seen as allowing external random forces acting on the particles. \cite{vialardExtensionInfiniteDimensions2013} extend this to infinite dimensions, by considering what happens as the number of particles tends to infinity, thereby including curves and surfaces in the model. To do this, the Hamiltonian equations are defined in specific Hilbert spaces of functions $M\mapsto\mathbb{R}^d$, $M=\S^m$, $m=1,2$.

\cite{bakerFunctionSpacePerspective2023} alternatively consider the space of shapes to be the Sobolev spaces $H^\nu(\S^m, \mathbb{R}^d)$, $m=1, 2$ and define stochastic processes directly in these Sobolev spaces. In order to implement this, a spherical harmonic basis is used. The initial shape is decomposed into the basis elements, and then each coefficient satisfies an one-dimensional SDE decoupled from the other coefficients. This setup allows for efficient implementation, but at the cost that there is no guarantee on the shapes being embeddings for long time spans.

\subsection{Langevin dynamics for landmark configurations}
In \cite{marslandLangevinEquationsLandmark2017}, the Hamiltonian form of the geodesic equations for landmark configurations is perturbed to form Langevin dynamics. The Hamiltonian equations \eqref{eq:landmark_geodesic} can be perturbed to the Langevin dynamics
\begin{equation*}
    \begin{split}
    dx_{t,i} &= \nabla_{p_{t,i}}H(x_t,p_t)dt \\
    dp_{t,i} &= \left(-\lambda\nabla_{p_{t,i}}H(x_t,p_t)-\nabla_{x_{t,i}}H(x_t,p_t)\right)dt + \sigma dW_{t,i}
\end{split}
%\label{eq:langevin_landmark}
\end{equation*}
where $\lambda$ is a diffusion coefficient and $\sigma$ is a noise amplitude. %Solutions of the landmark Langevin equations can then be lifted to the diffeomorphism group to give a stochastic flow of diffeomorphisms driven by the landmark dynamics.

\subsection{Autoregressive models and functional data analysis}
In addition to continuous-time stochastic processes, time series for manifold or shape valued data can be modeled with discrete-time autoregressive models or Bayesian models. Examples include autoregressive models for manifold data (\cite{xavierGeneralizationARProcesses2006,zhuSphericalAutoregressiveModels2024}) and Bayesian mixed-effects models for time-series data (\cite{schirattiLearningSpatiotemporalTrajectories2015}).

Another line of work models stochastic shape evolutions using functional data analysis (FDA) techniques. Here, observed trajectories are regarded curves on a shape manifold, and variability is captured by imposing probability models directly on such curves. Examples include Riemannian functional principal component analysis that models variation around a mean curve (\cite{daiPrincipalComponentAnalysis2018,linIntrinsicRiemannianFunctional2019}) and time-series for elastic shape models (\cite{suStatisticalAnalysisTrajectories2014}). Compared with such FDA approaches, the Kunita-flow formulation introduces stochasticity at the level of diffeomorphic transport, thereby yielding a mechanistic model of deformation dynamics, whereas the FDA approaches take a more statistical approach.

\section{Conditioning, bridge sampling and statistical inference}
\label{sec:conditioning_bridge_sampling}
For statistics of shapes when there are assumed underlying stochastic process dynamics, it is common to have shape observations for discrete, positive time points. This often leads to the need to condition the process on these observations, and subsequently to allow simulation from the conditioned process, i.e. sampling from the bridge process. Because of the importance of conditioning and bridge sampling in applications, we here survey methods for conditional sampling. Subsequently, we describe one approach for estimating parameters of the process from observed data using sampling of the conditioned process.

In finite dimensions, classical results state that a process $X_t$ resulting from an SDE $dX_t=b(X_t)dt+\sigma(X_t)dW_t$ conditioned on a value $X_T=y$ for a positive time $T$ results in a new process $X_t^y$ with SDE 
\begin{equation*}
    dX_t^y=b(X_t^y)dt+\sigma(X_t^y)\sigma(X_t^y)^* \nabla\log p_{t\to T}(X_t^y,y)dt+\sigma(X_t^y)dW_t
\end{equation*}
where $p_{t\to T}(x,y)$ is the transition density of the process started at $x$ at time $t$ and observed at $y$ at time $T$. The value $y$ would in statistical applications be observed data, e.g. observed shapes. The term $\nabla\log p_{t\to T}(X_t,y)$ is denoted the score of the process. The score is typically not tractable due to the transition density not being tractable. The bridge sampling techniques below use various ways to approximate or identify the score.

\subsection{Score approximations and landmark processes}
\label{sub:score_approximations}
Simulation of conditioned landmark shape processes can be approached by explicit approximations of the score. This is pursued in \cite{arnaudonDiffusionBridgesStochastic2022,sommerBridgeSimulationMetric2017} building on the conditioned diffusion simulation scheme of \cite{delyonSimulationConditionedDiffusion2006}. Here, the score $\nabla\log p_{t\to T}(X_t,y)$ is approximated by the score $-\frac{X_t-y}{T-t}$ of a Brownian motion. Under some assumptions on the process, this gives an approximation of the conditioned process that is absolutely continuous with respect to the true bridge with explicitly computable likelihood ratio. The bridge approximation is then the SDE
\begin{equation*}
    dX_t^\diamond=b(X_t^\diamond)dt-\frac{X_t^\diamond-y}{T-t}dt+\sigma(X_t^\diamond)dW_t
\end{equation*}
A better approximation of the dynamics is available with the guided proposal scheme of van der Meulen and Schauer, see \cite{miderContinuousdiscreteSmoothingDiffusions2021}. Here, the score approximation comes from an auxiliary process that is chosen to match the dynamics of the original process as closely as possible while still having tractable, closed-form transition densities. The score approximation then arises from the auxiliary process. This approach is used in the shape context in \cite{arnaudonDiffusionBridgesStochastic2022}. Denoting the transition density of the auxiliary process by $\tilde{p}_{t\to T}(x,y)$, the guided bridges are given by
\begin{equation}
    dX_t^\circ=b(X_t^\circ)dt+\sigma(X_t^\circ)\sigma(X_t^\circ)^* \nabla\log \tilde{p}_{t\to T}(X_t^\circ,y)dt+\sigma(X_t^\circ)dW_t .
    \label{eq:guided_bridge}
\end{equation}
This approach can be extended from bridges between two points to bridges along a directed acyclic graph, e.g. a tree with observations at the leaves. The underlying theory is developed in \cite{meulenBackwardFilteringForward2025} and applied in a shape context in \cite{stroustrupStochasticPhylogeneticModels2025} with shapes evolving along the branches of a phylogenetic tree. In section~\ref{sub:inference} below, we briefly describe how bridge simulation using \eqref{eq:guided_bridge} can be used to estimate parameters of the process from observed data in this setting.

\subsection{Conditioning in infinite dimensions}
To move from landmarks to continuous shapes, we need to extend the finite dimensional constructions above to infinite dimensions. In the linear case, this has been treated in \cite{goldysOrnsteinUhlenbeckBridge2008,dinunnoSPDEBridgesObservation2023}. However, as we have seen, shape stochastic processes generally depend non-linearly on the states (property (\ref{item:shape_structure_preservation}) in section~\ref{sec:axiomatic_approach}).
For strong solutions of (linear and non-linear) Hilbert space-valued SDEs, \cite{bakerConditioningNonlinearInfinitedimensional2024} developed a Doob's $h$-transform for conditioning processes to end in a given subset. 
Hence viewing the SDE via the Hilbert space formulation, we can use this to construct a conditioned process.

To this end, let $X_t \in L^2(D, \mathbb{R}^d)$ be the Hilbert space formulation of the shape process as in \eqref{eq:kunita_flow_hilbert}. Let $\Gamma \subset L^2(D, \mathbb{R}^d)$ be a non-zero measure subset containing the functions on which we wish to condition. Then if $\mathbb{E}[\delta_{\Gamma}(\tilde{X}_T) \mid \tilde{X}_t]$ is twice Fréchet differentiable,
$X_t$ conditioned on the event $X_T \in \Gamma$ satisfies another stochastic differential equation
\begin{align*}
    d\tilde{X}_t = Q^{1/2}(Q^{1/2})^* \nabla_{\tilde{X}_t} \log \mathbb{E}[\delta_{\Gamma}(\tilde{X}_T) \mid \tilde{X}_t]dt + Q^{1/2}(\tilde{X}_t) dW_t,
\end{align*}
with $\tilde{X}_0=0$, where $\delta$ is the Dirac delta function, see \cite{bakerConditioningNonlinearInfinitedimensional2024}. 
It is an open question as to whether $\mathbb{E}[\delta_{\Gamma}(\tilde{X}_T) \mid \tilde{X}_t]$ is twice differentiable when conditioning shapes processes.

Instead of conditioning directly on the process hitting a set $\Gamma$, we may instead condition on a smooth function, for example modelling normally distributed noise around the target with the function $k^y(x)=k(\|x-y\|)$ with $k$ being e.g. the squared exponential kernel $\kSqE$. Since $\kSqE$ is smooth, $\mathbb{E}[k^y(X_T) \mid X_t]$ will be twice Fréchet differentiable. Similarly to before, this induces a conditioned process
\begin{align*}
d\hat{X}_t = Q^{1/2}(\hat{X}_t)Q^{1/2}(\hat{X}_t)^* \nabla_{\hat{X}_t} \log \mathbb{E}[k^y(\hat{X}_T) \mid \hat{X}_t]dt + Q^{1/2}(\hat{X}_t) dW_t,
\end{align*}
with initial condition $\hat{X}_0=0$.

If more structure is available, e.g. if a base measure and corresponding density is available, then it is possible to condition on points directly, see e.g. \cite{pieper-sethmacherClassExponentialChanges2025}.

\subsection{Inference}
\label{sub:inference}
We now describe how parameters of a shape process can be estimated from observed data using the guided bridge scheme described in section~\ref{sub:score_approximations}. We will use the backwards filtering, forwards guiding methods of \cite{meulenBackwardFilteringForward2025} as implemented in the Hyperiax framework\footnote{\url{https://github.com/ComputationalEvolutionaryMorphometry/hyperiax/}}. This extends the guided proposal scheme described in section~\ref{sub:score_approximations} to trees and directed acyclic graphs. A more detailed description of the methodology can be found in \cite{meulenBackwardFilteringForward2025} and \cite{stroustrupStochasticPhylogeneticModels2025}. We focus on landmark shape spaces $\cS_n$ representing butterfly wing shapes, and we perform inference from simulated data demonstrating how parameters of the model can be retrieved. We refer to the above papers for similar results on real data.

Emulating a phylogenetic context where the shapes exhibit random changes through evolution and where species branches over time, we assume a root butterfly wing shape is affected by a stochastic process that branches according to a simple phylogenetic tree. The stochastic process is induced by a Kunita flow with Mat\'ern kernel. When sampling the forward process, we get observations of the shapes at the leaves of the tree. Denoting such a set of leaf observations by $s_1,\ldots,s_N$, we can use the guided bridge scheme to estimate the parameters of the process. This is done by using the guiding process \eqref{eq:guided_bridge} along each edge of the tree. This forward sampling is interchanged with a backwards filtering step that, for a given sample of parameters, propagates from leaves upwards in the tree the information necessary for guiding the bridge, i.e. the parameters of the auxiliary process $\tilde{p}_{t\to T}(x,y)$ in \eqref{eq:guided_bridge}. The backward filtering follows the backward filtering scheme of \cite{meulenBackwardFilteringForward2025}. These steps are repeated in an MCMC loop that iteratively proposes new values for the parameters $\alpha$ and $\sigma$ of the Mat\'ern kernel in between the forwards sampling and backwards filtering steps.

Figure~\ref{fig:inference} shows the phylogenetic tree, sample butterflies at the leaves, and MCMC trace plots of the estimated parameters together with the true parameters. It can be seen that the MCMC chain converges to the true parameter values allowing for estimation of the parameters only from observing the values at the leaves of the tree.

\begin{figure}[h]
    \centering
    \includegraphics[width=0.34\textwidth]{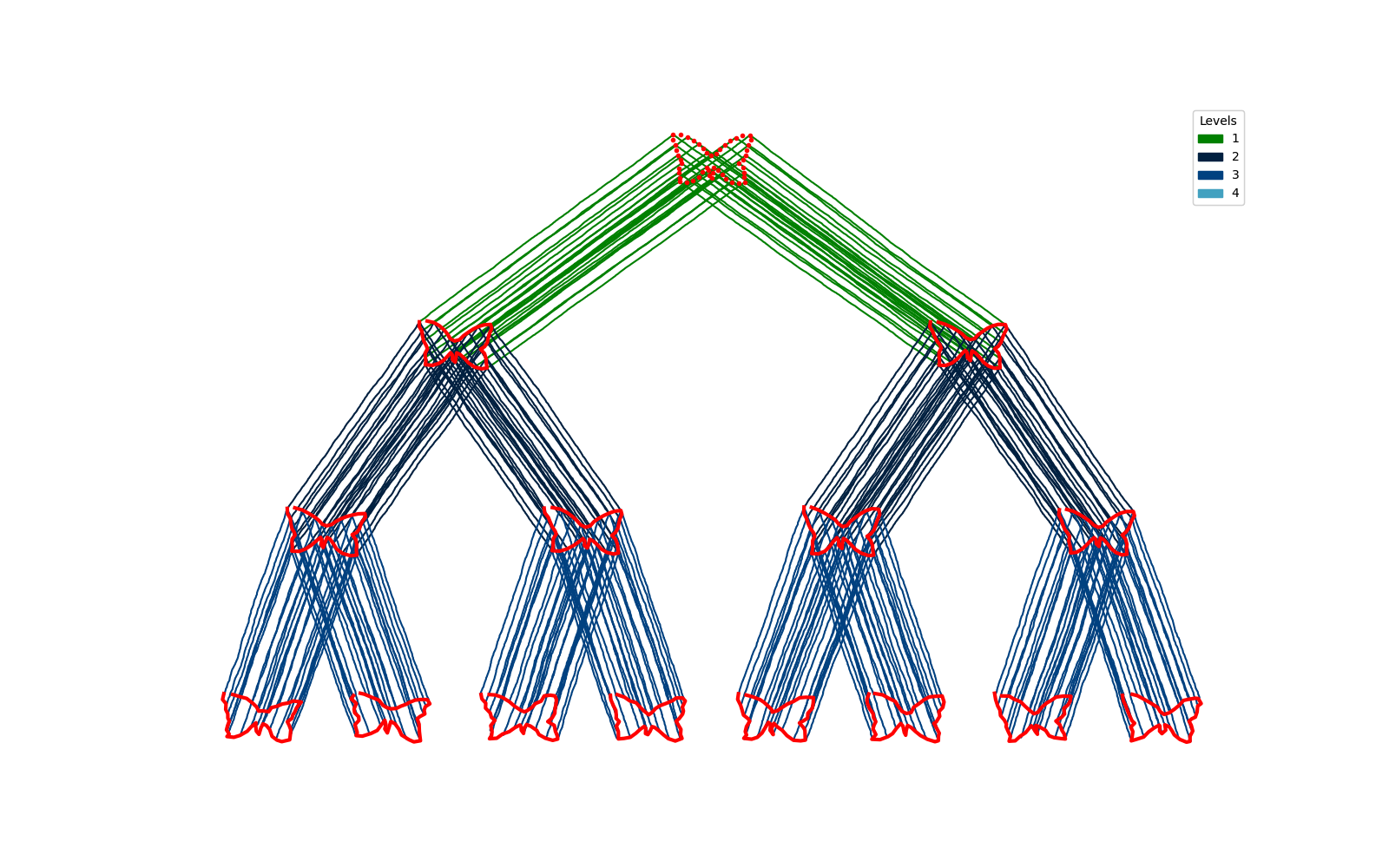}
    \includegraphics[width=0.19\textwidth,clip,trim={150 20 100 0}]{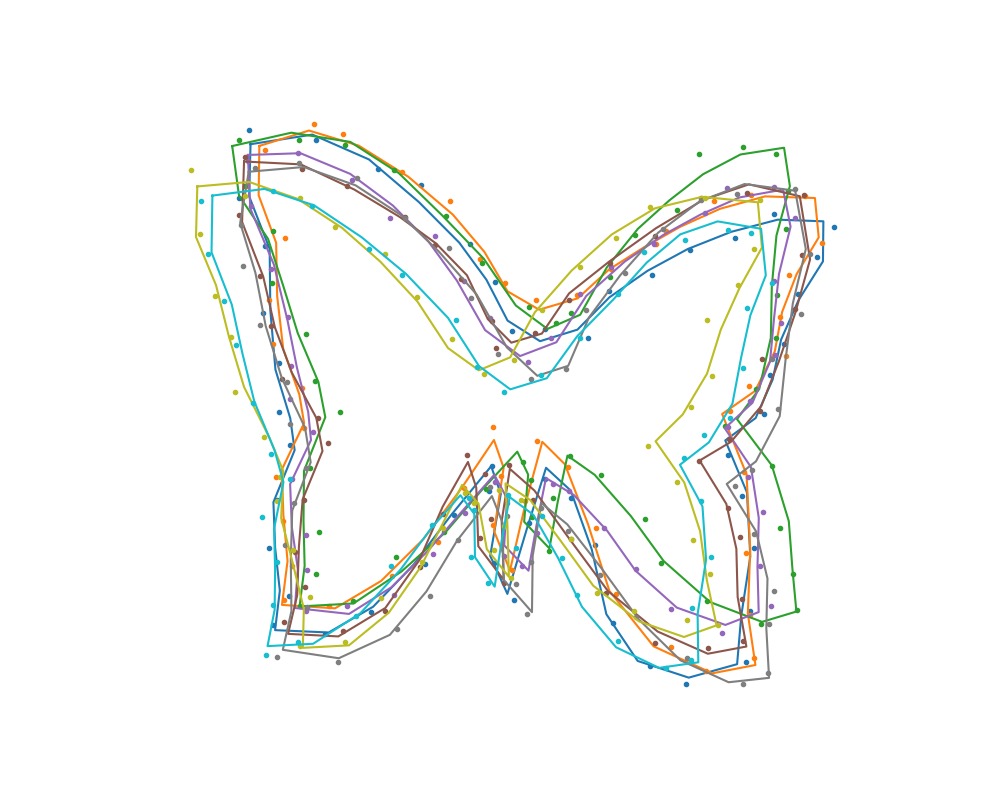}
    \includegraphics[width=0.39\textwidth,clip,trim={0 0 364 0}]{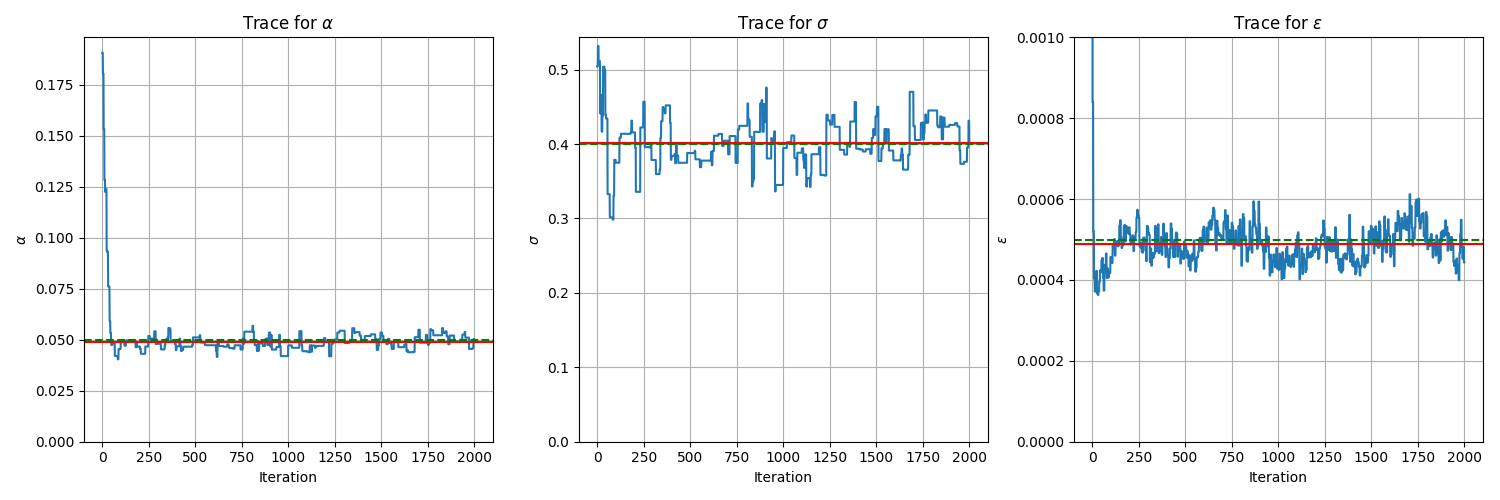}
    \caption{(Left) Example phylogenetic tree with butterfly wing shapes observed at the leaves. (Center) Observed leaf shapes (outlines) with observation noise on landmarks (dots) (Right) MCMC trace plots of the estimated parameters of the kernel $\kMat$, mean of the estimated parameters after burn-in (horizontal solid red lines), and true values (horizontal dashed green lines).}
    \label{fig:inference}
\end{figure}

\section*{Acknowledgements}
The work presented in this paper was supported by the Villum Foundation Grant 40582, and the Novo Nordisk Foundation grants NNF18OC0052000, NNF24OC0093490 and NNF24OC0089608.

%\bibliographystyle{alpha}
%\bibliography{references}
\printbibliography

\end{document}